\documentclass{amsart}

\usepackage{amsmath}
\usepackage{amsfonts,txfonts}
\usepackage{amsthm}
\usepackage{graphicx} 
\usepackage{amssymb,mathrsfs}
\usepackage{url}
\usepackage{color}

\newtheorem{theorem}{Theorem}[section]
\newtheorem{lemma}[theorem]{Lemma}
\newtheorem{corollary}[theorem]{Corollary}
\newtheorem{proposition}[theorem]{Proposition}

\theoremstyle{definition}
\newtheorem{definition}[theorem]{Definition}
\newtheorem{algorithm}[theorem]{Algorithm}

\theoremstyle{remark}

\numberwithin{equation}{section}

\newcommand{\abs}[1]{\lvert#1\rvert}

\newcommand{\cS}{\mathcal{S}}
\newcommand{\cN}{\mathcal{N}}

\newcommand{\bN}{\mathbb{bN}}
\newcommand{\bR}{\mathbb{R}}
\newcommand{\bQ}{\mathbb{Q}}
\newcommand{\bC}{\mathbb{C}}
\newcommand{\bZ}{\mathbb{Z}}

\newcommand{\sC}{\mathscr{C}}
\newcommand{\sH}{\mathscr{H}}
\newcommand{\sB}{\mathscr{B}}
\newcommand{\sA}{\mathscr{A}}
\newcommand{\sP}{\mathscr{P}}

\newcommand{\CC}{\mathbb{C}}
\newcommand{\CTwo}{\mathbb{C}^2}
\newcommand{\Ct}{\mathbb{C}^2}
\newcommand{\Cn}{\mathbb{C}^n}
\newcommand{\RR}{\mathbb{R}}
\newcommand{\ep}{\varepsilon}

\newcommand{\cD}{\mathcal{D}}

\newcommand{\cB}{\mathcal{B}}
\newcommand{\cR}{\mathcal{R}}

\newcommand{\sector}{\rho}
\newcommand{\Set}{\Xi}

\renewcommand{\Re}{\text{Re}}
\renewcommand{\Im}{\text{Im}}

\newcommand{\norm}[1]{\left\| #1\right\|}
\newcommand{\onorm}[1]{\abs{#1}}
\newcommand{\snorm}[1]{\norm{#1}}
\newcommand{\enorm}[1]{\norm{#1}_e}
\newcommand{\sonorm}[1]{\onorm{#1}}

\newcommand{\Henon}{H\'{e}non }

\begin{document}

\title[Computability properties of hyperbolic complex H\'{e}non maps]
{Computability properties of hyperbolic complex H\'{e}non maps}

 
\author[S. Boyd]{Suzanne Boyd}
\address{Department of Mathematical Sciences\\
University of Wisconsin Milwaukee\\
PO Box 413\\
Milwaukee, WI 53201, 
USA}
\email{sboyd@uwm.edu, ORCID: 0000-0002-9480-4848}

\author[C. Wolf]{Christian Wolf}
\address{Department of Mathematics and Statistics\\
Mississippi State University\\
Starkville, MS 39759, USA}
\email{cwolf@math.msstate.edu, ORCID: 0000-0002-7976-3574.}
\thanks{C.W.\ was partially supported by a grant from the Simons Foundation (SFI-MPS-TSM-00013897). A significant portion of the paper was developed when S.B.\  was visiting  the Department of Mathematics and Statistics of Mississippi State University. We thank the department for their hospitality and their support of the visit.}

\subjclass[2020]{Primary: 37F10, 37D20; Secondary: 37F15, 03D80, 03D15.  }
\date{\today}
\keywords{Complex Dynamical Systems, Computability, Complex H\'{e}non Maps, Hyperbolicity}

\begin{abstract}
In this article, we provide the first theoretical framework guaranteeing
that computers can, in principle, be used to analyze the parameter space of complex \Henon maps. More precisely, we obtain computability results for hyperbolic polynomial diffeomorphisms of $\mathbb{C}^2$, for which H\'{e}non maps are prototypical examples.

Specifically, we establish computability of the Julia set for hyperbolic maps, semi-decidability of hyperbolicity, and lower computability of the hyperbolicity locus in the parameter space of generalized H\'{e}non mappings of fixed degree at least two.

Our approach builds upon techniques developed in Boyd and Wolf's recent previous works on polynomial maps of $\mathbb{C}$ and polynomial skew products of $\mathbb{C}^2$. In the setting of polynomial diffeomorphisms of $\Ct$, however, establishing hyperbolicity for the Julia set is considerably more difficult, as it requires identifying unstable (and stable) cone fields that are preserved and expanded by $Df$ (respectively $Df^{-1}$), and also due to the lack of algorithmically detectable quantitative shadowing.

\end{abstract}

\maketitle

\section{Introduction}
\label{sec:Intro}

\subsection{Motivation}
Computational methods play an important role in the study of dynamical systems, both as tools for experimentation and as a method to guide theoretical investigation. This naturally raises the question of which dynamically defined objects, such as invariant sets, invariant measures, or parameter space loci, can, in principle, be computed with rigorously controlled  accuracy. These issues are studied in \textit{computability in dynamical systems}, a synthesis of dynamical systems with abstract computability theory.

The central idea of computability theory is to represent mathematical objects (such as points, sets, and functions) by convergent sequences generated by a Turing machine (which, for our purposes, can be viewed as a computer algorithm). A point, set, or function is said to be \textit{computable} if there exists a Turing machine that can produce approximations to it with any prescribed level of accuracy. For further details, see subsection \ref{sec:compute:basic} and \cite{Tu, Wei2000}.

Computability in dynamical systems has received considerable attention over the past two decades. In particular, substantial progress has been made in understanding the computability and non-computability of Julia sets in one-dimensional complex dynamics (see, e.g., \cite{BBRY-2011, BBY1, BY,  Braverman2005, Braverman-Yampolsky-2008, BY2, D1, DY2018, R2005}). More recently, researchers have also investigated the computability of a range of dynamical invariants, including entropy, general invariant sets, spectra, topological pressure, zero-temperature limits, and equilibrium states (see, e.g., \cite{Binder-et-al-2024, BDWY2022,  BurrWolf2018, Burr-Wolf-2024, BSW2020, GHR2011, GraccaEtAl2018, HP2025, HS, HM, HR}).

One of the most striking discoveries in the computability theory of one-dimensional complex dynamics is that there exist polynomials with computable coefficients whose Julia sets are nevertheless non-computable \cite{BY}. Even more remarkably, the measure of maximal entropy, the Brolin–Lyubich measure, which is supported on the Julia set, is always computable \cite{BBRY-2011}. This dichotomy highlights a fundamental distinction between the computability of sets and that of their associated invariant measures.

Much less is known in higher-dimensional complex dynamics. Polynomial diffeomorphisms of $\mathbb{C}^2$ exhibit both expanding and contracting behavior, and there are no simple algorithmic criteria that distinguish different dynamical behavior. As a result, basic computability questions in this setting have remained largely open.

Recently, Boyd and Wolf  (\cite{BoydWolf-Skew1}) studied computability of invariant sets for certain polynomial endomorphisms of $\mathbb{C}^2$, namely polynomial skew products. There, the skew product structure gives invariance of vertical complex lines, simplifying the analysis. In the present paper, we turn to invertible systems and consider polynomial diffeomorphisms of $\mathbb{C}^2$, which are up to conjugacy finite compositions of generalized complex H\'enon maps, see \cite{FM}.

The goal of this paper is to establish rigorous computability results for hyperbolic polynomial diffeomorphisms of $\mathbb{C}^2$ and to understand how hyperbolicity can be detected algorithmically. In particular, we provide effective procedures for computing Julia sets, verifying hyperbolicity, and identifying hyperbolic regions in parameter space.

In the next subsection, we describe our results in more detail.

\subsection{Statement of the results}

In this paper, we address computability questions for hyperbolic polynomial diffeomorphisms of $\bC^2$ with non-trivial dynamics. A prototype of class of such maps are complex \Henon maps given by 
\begin{equation}\label{Henon}
  H_{a,c}(z,w) = (z^2+c-aw,z),
\end{equation}
where $a\in \bC\setminus \{0\}$ and $c\in \bC$. It is straight-forward to show that $H_{a,c}$ has constant Jacobian $a$ and that the inverse $H_{a,c}^{-1}$ (which has Jacobian $a^{-1}$) is affinely conjugate to a map of the form \eqref{Henon}. 
\Henon maps were originally introduced as diffeomorphisms of $\RR^2$  for $a,c$ being real parameters.
For example, Benedicks and Carleson established the existence of chaotic behavior
in the form of a strange attractor for certain real \Henon maps \cite{BC}.  In this paper, we treat $H_{a,c}$ as a diffeomorphism of $\Ct$, and allow $a,c$ to be complex.
Foundational work on the dynamics of the complex {\Henon } family has been carried out by Bedford and Smillie  \cite{BLS1, BLS2, BS6, BS9}, Hubbard et al. \cite{HPV, HOV1, HOV2}, and Fornaess and Sibony \cite{FS1992c}.
Still, many questions remain unanswered, see, e.g., \cite{Hen} for a collection of open problems.

For a polynomial diffeomorphism of $\bC^2$, the notion of non-trivial dynamics refers to $f$ having a dynamical degree $d(f)>1$ (see Equation~\eqref{dyndeg} for the definition), which is equivalent to $f$ having positive topological entropy.   These maps turn out to be, up to conjugacy,
finite compositions of ``generalized'' \Henon maps; that is, finite compositions of maps of the form $f(z,w) = (p(z)-aw,z),$ where $p$ is a (monic) complex polynomial of degree $d\geq 2$ (see \cite{FM}).
For generalized \Henon mappings, we may without loss of generality  assume that $f$ has Jacobian smaller or equal than $1$, because otherwise we can simply consider $f^{-1}$. 

There are several invariant sets that capture the basic features of a  polynomial diffeomorphism $f$ of $\Ct$ (of dynamical degree $>1$), analogous to the case of polynomials in $\bC$. Namely, let $K^+$ resp.\ $K^-$ be the set of points whose forward resp.\ backward orbits are bounded. Define $J^{\pm} = \partial K^{\pm}$, $J=J^+\cap J^-$ and $K=K^+\cap K^-$. Both $J$ and $K$ are bounded. The set $J$ is called the Julia set of $f$. 
We say $f$ is hyperbolic if $J$ is a uniformly hyperbolic set, that is, if there exists a continuous $Df$-invariant splitting $T_J \bC^2= E^s\oplus E^u$ such that $Df$ uniformly contracts on $E^s$ and uniformly expands on $E^u$. In this case, the non-wandering set $\Omega_f$ is the union of $J$ and finitely many (possibly zero) attracting periodic points. Moreover, the saddle points are dense in $J$. Furthermore, hyperbolicity is an open property in the parameter space, and nearby hyperbolic maps are topologically conjugate. We refer to \cite{BS1}
for more details.

Our first main result concerns the computability of the Julia set for hyperbolic polynomial diffeomorphisms of $\CTwo$.  Namely, we have the following result:

\begin{theorem} \label{thm:Jcomputable}
Let $f$ be a hyperbolic polynomial diffeomorphism of $\CTwo$ of degree $d(f)>1 $. Then $J_{f}$ is computable.
\end{theorem}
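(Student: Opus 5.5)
To prove that $J_f$ is computable, I will build a Turing machine (an oracle machine, since the coefficients of $f$ are given by approximations) that, for each $n$, outputs a finite union of closed rational squares $J_n$ with Hausdorff distance $d_H(J_n,J_f)<2^{-n}$. It suffices to have an upper approximation (a cover of $J_f$ by dyadic boxes shrinking to $J_f$) and a lower approximation (finitely many rational points, each certified to lie within $2^{-n}$ of $J_f$, forming a $2^{-n}$-net of $J_f$). As in Boyd--Wolf's earlier work on polynomials and skew products, both halves rest on certificates that can be found by a search, which halts because hyperbolicity guarantees that a certificate exists.

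\emph{Step 1 (filtration and cover of $K$).} From the coefficients I compute an explicit radius $R$ so that the standard filtration $V=\{\abs{z},\abs{w}\le R\}$, $V^\pm$ holds for $f$. Then $K\subset V$, $K^+\cap V=\bigcap_k f^{-k}(V)\cap V$, and similarly for $K^-$. Using interval arithmetic on a dyadic grid of mesh $2^{-m}$, I compute grid-box unions $B_m$ containing $\bigcap_{|k|\le N}f^{k}(V)$. These upper-approximate $K$, and they converge to $K$ in the Hausdorff metric.

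\emph{Step 2 (certified hyperbolicity and separating $J$ from attracting basins).} This is the main obstacle. Because $f$ is hyperbolic, there is an open neighborhood $U$ of $J$ carrying continuous cone fields $\mathcal C^u,\mathcal C^s$ and constants $\lambda>1$, an adapted metric, and a finite set of attracting cycles with explicit trapping disks $D_i$, such that $K\subset U\cup\bigcup_i D_i$. I will enumerate finite combinatorial data: a box cover $\mathcal B$, a constant cone field on each box (complex lines with aperture), a candidate list of attracting cycles with trapping polydisks, and an iterate $f^\ell$. Using interval arithmetic, I then check three things: that $Df^\ell$ maps $\mathcal C^u$ strictly into $\mathcal C^u$ at the image box while expanding vectors by a factor greater than $1$, with the analogous statement for $Df^{-\ell}$ on $\mathcal C^s$; that the $D_i$ are mapped strictly inside themselves; and that the invariant part of $\mathcal B$ together with the $D_i$ covers $B_m$. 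A standard cone-field argument, applied through an adapted norm built from a finite iterate, shows that such a cone structure exists on a box neighborhood of $J$. So this semi-decision halts, and by the cone criterion the verified data certifies hyperbolicity on the covered set. Removing the boxes lying inside $\bigcup_i D_i$, which lie in $\operatorname{int}K^+$, and the symmetric ones for $f^{-1}$ leaves an upper approximation $\mathcal B_m\supset J$. Refining $m$ should make $\mathcal B_m\to J$. The delicate points here are, first, that the cone-preservation conditions are open, so finite-precision verification succeeds; and second, controlling points of $K\setminus J$ near $J$, which in the hyperbolic case lie only in the attracting basins' interiors.

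\emph{Step 3 (lower approximation via saddle points).} Saddle points are dense in $J$. For each box of $\mathcal B_m$ I search for a periodic point of $f^p$ inside it, certified by a Krawczyk or interval Newton test applied to $f^p-\mathrm{id}$ on a small box. This test guarantees a unique periodic point in a box of diameter less than $2^{-n}$. I then check that the point lies in the cone-verified region and not in any $D_i$; by Step 2 it is therefore a saddle, and hence lies in $J$. Finally, I output the union of $2^{-n}$-neighborhoods of the certified saddles, but only once every box of $\mathcal B_m$ (for $m$ large) is within $2^{-n}$ of a certified saddle; a box with no nearby saddle is instead discarded once a finer cover excludes it. Since saddles are dense and $\mathcal B_m\to J$, this process terminates. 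Both $d_H$ bounds then hold, so $J_f$ is computable.
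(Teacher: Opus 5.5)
\textbf{There is a genuine gap in Step~2, the upper bound.} Your lower bound (Krawczyk-certified saddles plus density of saddle points in $J$) is essentially the paper's mechanism. The problem is that discarding only the boxes lying inside the trapping polydisks $D_i$ does not turn your cover $B_m$ of $K$ into a cover converging to $J$.

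Suppose $f$ has a sink $q$. Then necessarily $\abs{\det Df}<1$, so $K^-=J^-$ and $K\setminus J=J^-\cap\operatorname{int}K^+$. This set is much more than the sinks:
\begin{itemize}
\item We have $q\in J^-=\overline{W^u(p)}$ for a saddle $p$, so $W^u(p)$ meets $\operatorname{int}D_i$.
\item The backward orbit of a point $z\in W^u(p)\cap\operatorname{int}D_i$ stays in $K$ and converges to the orbit of $p\subset J$.
\item Its first point $w$ outside $\bigcup_i D_i$ lies in $\bigl(K\cap f^{-1}(\bigcup_i D_i)\bigr)\setminus\bigcup_i D_i$. Hence $w$ is at distance at least $\epsilon_0:=\operatorname{dist}\bigl(J,f^{-1}(\bigcup_i D_i)\bigr)>0$ from $J$.
\end{itemize}
The box containing $w$ is never discarded. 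It is not inside $\bigcup_i D_i$. There are no trapping regions for $f^{-1}$ in the dissipative case, since $K^-$ has empty interior. And no refinement of a cover of $K$ removes a box meeting $K$. So $d_H(\mathcal{B}_m,J)\ge\epsilon_0$ for every $m$.

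Since all certified saddles lie in $J$, your termination condition in Step~3 then fails forever once $2^{-n}$ is small compared to $\epsilon_0$. This happens, for example, for $H_{a,c}$ with $a=0.15$, $c=-1.1875$, which has an attracting $2$-cycle.

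For the same reason, the decomposition $K\subset U\cup\bigcup_i D_i$ and the covering check in Step~2 do not follow from hyperbolicity. These wandering pieces of $W^u(J)$ inside the basins lie neither in the small neighborhood $U$ of $J$ on which cones can actually be propagated, nor in small trapping polydisks. So even that search need not halt.

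\textbf{What is missing, and how the paper avoids it.} You need a sound and complete way to discard the wandering part of $K$. One repair is to discard a box $B$ once interval arithmetic certifies $f^j(B)\subset\operatorname{int}D_i$ for some $j$ (and symmetrically for $f^{-1}$).
\begin{itemize}
\item Soundness: such a $B$ lies in $\operatorname{int}K^{\pm}$ and so misses $J$.
\item Completeness: for hyperbolic $f$, the compact set $K\setminus\cN(J,\epsilon)$ lies in the union of the basins $\operatorname{int}K^+\cup\operatorname{int}K^-$. So all boxes of a fine enough cover that are far from $J$ get certified after boundedly many iterates.
\end{itemize}
The paper sidesteps the issue entirely by covering the chain recurrent set instead of $K$. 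The set $\cR=J\cup\{\text{attracting cycles}\}$ contains no wandering points, so these arcs never enter the box chain recurrent model. The attracting pieces are then split off as strongly connected components of the box graph, and each is classified by the eigenvalues of one of its periodic points.

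\textbf{The cone-field certificate is unnecessary here.} The paper deliberately avoids certifying hyperbolicity for Theorem~\ref{thm:Jcomputable}. Saddle type of a certified periodic point can be read off from interval enclosures of the eigenvalues of $Df^p$. Your cone criterion would in any case require the whole orbit, not just the point, to lie in the cone-verified region. Moreover, the halting of your cone search is essentially Theorem~\ref{thm:Hyp-semi-decideable}, which you only sketch.
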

In fact, we prove an even stronger result,  showing that the Julia sets of all hyperbolic polynomial diffeomorphisms of $\bC^2$ of fixed degree can be computed by one Turing machine. We delay the precise statement until after we have formally introduced the required concepts from computability theory.
The basic idea behind the proof of Theorem \ref{thm:Jcomputable} is to approximate the chain recurrent set by a decreasing sequence of finite unions of boxes. The objective is to determine the stage in this approximation at which every box contains a point of the Julia set. This property is tested by computing all saddle periodic points up to a given, but progressively increasing, period. By also incorporating the previously obtained collections of boxes, we ensure that the procedure terminates and yields the desired approximation of the Julia set.
We note that though we mention \Henon mappings in this introduction due to their significance in the literature, we prove the above theorem for polynomial diffeomorphisms without relying on the \Henon structure. 

Unlike in the recent computability work on polynomial skew products (\cite{BoydWolf-Skew1}) or one-dimensional maps (\cite{BoydWolf-1Dim}), for polynomial diffeomorphisms of $\Ct$ we provide an algorithm to compute $J$ which doesn't require establishing hyperbolicity of the map along the way; we just prove if $f$ is hyperbolic then the algorithm halts and has computed $J$ to the desired precision. However, we provide confirmation of hyperbolicity as an additional result.

\begin{theorem} \label{thm:Hyp-semi-decideable}
A polynomial diffeomorphism $f$ of $\CTwo$ with dynamical degree $d(f)>1$ being hyperbolic is a semi-decidable property. 
\end{theorem}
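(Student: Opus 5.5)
We design a Turing machine that takes an oracle for the coefficients of $f$ (equivalently, of the finitely many generalized Hénon factors in its normal form) and halts exactly when $f$ is hyperbolic. Semi-decidability follows because hyperbolicity is witnessed by a finite, robust certificate. The machine runs over stages $n=1,2,\dots$. At stage $n$ it reads the coefficients to precision $2^{-n}$. It then computes an explicit filtration radius $R$ such that $K\subset \{\abs{z},\abs{w}\le R\}$. Next it takes the grid of dyadic boxes of side $2^{-n}$ covering this bidisk and builds a rigorous outer approximation $\mathcal{B}_n$ of the chain recurrent set, using interval arithmetic on the transition graph and keeping only boxes that lie on cycles of the graph. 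Finally it tests, by interval arithmetic, whether a certificate of hyperbolicity exists on $\mathcal{B}_n$. If the test succeeds the machine halts; otherwise it moves to stage $n+1$.

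\textbf{The certificate.} The boxes of $\mathcal{B}_n$ split into two kinds. Boxes of the first kind are contained in basins of attracting cycles. To certify these, we require the machine to find finitely many small balls $D_i$ around points with $f^{p}(D_i)\Subset D_i$ (checked rigorously), and to show that each such box lands in some $D_i$ after a bounded number of iterates. Boxes of the second kind are the remaining ones. On their union $U$ we require a cone-field certificate. This means constant (per-box) unstable cones $C^u_B$ and stable cones $C^s_B$ in $\bC^2$, an integer $m$, and a constant $\lambda>1$ such that, for every box $B$ and every box $B'$ meeting $f^m(B)$, the following hold: $Df^m_x(C^u_B)\subset \operatorname{int} C^u_{B'}$ and $\norm{Df^m_x v}\ge \lambda\norm{v}$ for all $v\in C^u_B$ and $x\in B$; and symmetrically for $Df^{-m}$ and the stable cones. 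All of these conditions are finite interval-arithmetic inequalities, so they can be verified in finite time.

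\textbf{Soundness.} If the certificate succeeds, then $J\subset\Omega_f\subset$ chain recurrent set $\subset \bigcup\mathcal{B}_n$. Points of $J$ cannot lie in attracting basins, so they lie in $U$. The standard cone-field criterion (Alekseev/Newhouse) then gives a continuous invariant splitting over the invariant set $\bigcap_k f^{k}(U)\supset J$ with uniform expansion and contraction, so $f$ is hyperbolic. Because all inequalities are checked with interval bounds that hold for every map whose coefficients lie in the current $2^{-n}$-ball, the verdict does not depend on which oracle approximation was read.

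\textbf{Completeness, the main obstacle.} Suppose $f$ is hyperbolic. Then the chain recurrent set equals $J$ together with finitely many attracting cycles (see \cite{BS1}). The outer approximations $\mathcal{B}_n$ converge in the Hausdorff sense to the chain recurrent set, so for large $n$ every box of $\mathcal{B}_n$ lies either in a small neighborhood $V$ of $J$ or in a small neighborhood of an attracting cycle. Hyperbolicity of $J$ gives continuous splittings, which extend to a neighborhood $V$. There, after passing to an adapted metric (approximated by a fixed linear change of coordinates on each small region) and taking an iterate $m$, the true cones are strictly mapped into themselves with a uniform margin. By continuity of $Df^{\pm m}$, replacing each cone by a locally constant one and each point by a box of size $2^{-n}$ preserves the strict inequalities once $n$ is large, and the interval-arithmetic overestimates shrink with the box size. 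The delicate part is showing that the boxes of $\mathcal{B}_n$ meeting neither $V$ nor the attracting cycles disappear for large $n$, and that boxes near basins land in the $D_i$ in bounded time. This needs a quantitative step: $\bigcap_n\bigcup\mathcal{B}_n$ equals the chain recurrent set, and compactness plus the finiteness of the attracting cycles gives a uniform stage $n$. The machine searches over all candidate $m$, cone data and $D_i$ by dovetailing over a countable enumeration, so a valid certificate is found at some finite stage and the machine halts.
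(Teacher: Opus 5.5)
Your proposal is correct, but it follows a different route from the paper's.

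\emph{The paper's route.} The paper never searches for a certificate. It runs Algorithm~\ref{alg:computeJ} until every doubled box of the saddle-type components contains a computed saddle periodic point; this relies on the density of saddle points in $J$ and on $\cR = J\cup\{\text{sinks}\}$. It separates the sinks by the eigenvalues at one periodic point per component, not by trapping balls. It reads $m$ and $\lambda$ off the eigendirections at those periodic points. Each box's cone is built from the eigenspaces of its periodic point, with aperture $\sector_\lambda$. Finally, the Lipschitz constant $Q$ and the cone margin $\gamma$ give a radius $\Delta$, and the algorithm halts once $2^{-N}<\Delta$.

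\emph{Your route.} You dovetail over arbitrary finite cone and trapping-ball data and verify each candidate by interval arithmetic over whole boxes and over a whole ball of coefficients. Completeness then comes from two facts: a strict cone condition near the continuous splitting over $J$ is robust for a fixed $m$ as the box size tends to $0$, and $\mathcal{B}_n$ converges in the Hausdorff sense to $\cR$.

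\emph{What each buys.} The paper's route is constructive. It gives explicit $m$, $\lambda$ and $\Delta$, and a $2^{-N}$-approximation of $J$ in which every box carries a saddle point, all without blind search.

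Your route gives a soundness argument that does not depend on how the certificate was found. In particular, you require $Df^m_x(C^u_B)\subset \operatorname{int} C^u_{B'}$ for every box $B'$ meeting $f^m(B)$, which is exactly what the cone criterion needs. The paper's Steps 4--5 only compare $D_yf^m(C^u_x)$ with $C^u_{f^m(x)}$, even though $f^m(y)$ may land in a box whose cone was built from a different periodic point.

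Your halting argument also has a cleaner form. It fixes $m$ and the margin and lets the box size shrink. The paper instead appeals to stabilization of $m_N$, $\lambda_N$ and $\Delta$, and $\gamma$ is a minimum over a growing set of periodic points.

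Because your checks are uniform over a coefficient ball, they also directly yield the open parameter neighborhoods used for Theorem~\ref{thm:hyp-locus}.

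\emph{Two small points.}
\begin{itemize}
\item Soundness uses $J\subset\cR$ for a map not yet known to be hyperbolic. This does hold: it follows from Bedford--Smillie's density of $W^s(p)$ in $J^+$ and of $W^u(p)$ in $J^-$ for any saddle $p$, but it deserves a citation.
\item To exclude $J$ from the first-kind boxes you only need $D_i\subset\operatorname{int}K^+$, which $f^p(D_i)\Subset D_i$ gives at once. You do not need the $D_i$ to be basins of attracting cycles.
\end{itemize}
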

More precisely, the statement of Theorem \ref{thm:Hyp-semi-decideable} means that there exists a Turing machine that, on input of an oracle for the coefficients of the complex polynomials defining $f$, 
 halts if $f$ is hyperbolic, and runs forever if $f$ is not hyperbolic. 

To establish hyperbolicity we construct (non-overlapping) unstable and stable cone fields which are preserved and expanded by the map or respectively, its inverse. 
In a more general setting, Newhouse and Palis \cite{New, NP1} showed that
an $f$-invariant set $\Lambda$ is hyperbolic for $f$ iﬀ there is a field of ``unstable'' cones in the tangent
bundle over $\Lambda$ such that $Df$ maps the cone field inside itself, and such that in some
norm, $Df$ uniformly expands the cones, and a field of ``stable'' cones preserved and expanded by $Df^{-1}$. Moreover, the cone fields need not be continuous in $x \in \Lambda$;
hence, the cone field criterion for hyperbolicity yields a natural way to study the
hyperbolic structure of a diﬀeomorphism by using a computer. In our algorithm,
we build cones which are constant  on individual boxes of collections of boxes which are subsets of a (large) box $V_R$ which is guaranteed to contain $K$.

In contrast to  \cite{BoydWolf-Skew1} and \cite{BoydWolf-1Dim}, the algorithms developed in this article do not rely on the shadowing lemma, and, in particular, no computable version of the shadowing lemma is required. Instead, our approach
relies on the fact that the saddle points are dense in $J$.  We observe that, in fact, the algorithms in this article could be adapted to derive a quantitative shadowing result; that is, to algorithmically determine the dependence of the constants associated with shadowing lemma map. To the best of our knowledge, this result has not been known for hyperbolic sets of saddle type. 

In order to provide a parameter space result, we focus on generalized \Henon mappings rather than all polynomial diffeomorphisms. 
For $d\geq 2$ consider the set of polynomials 
$\sA_d = \{ z^d + a_{d-2}z^{d-2} + \cdots + a_1 z + a_0: a_\ell \in \CC \} \cong \CC^{d-2}$, parametrized by the coefficients of the polynomial. 
Let $\sH_d(a,c)$ denote the set of polynomial diffeomorphisms of $\bC^2$ of the form $(p_c(z)-aw,z),$ where $p_c\in \sA_d$ and $a\in\bC\setminus\{0\}.$

Then, by combining Theorems \ref{thm:Jcomputable} and \ref{thm:Hyp-semi-decideable} with Algorithm 4.1 and Theorem 1.3 of \cite{BoydWolf-Skew1} adapted to our setting we obtain the following:

\begin{theorem}\label{thm:hyp-locus}
 Let $d\geq 2$. 
    The hyperbolicity locus of  generalized \Henon maps within the family $\sH_d(a,c)$  is lower semi-computable.
\end{theorem}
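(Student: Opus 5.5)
The plan is to exhibit the hyperbolicity locus $\mathcal{U}\subset \sH_d(a,c)\cong (\CC\setminus\{0\})\times\CC^{d-1}$ as an effective countable union of rational open boxes (polydisks with rational centers and radii). Concretely, I will construct a Turing machine that enumerates a sequence of such boxes $B_1,B_2,\dots$ with $\mathcal{U}=\bigcup_j B_j$; this is exactly lower semi-computability of an open set. The construction follows Algorithm 4.1 of \cite{BoydWolf-Skew1}, with the semi-decision procedure of Theorem \ref{thm:Hyp-semi-decideable} replacing the skew-product hyperbolicity test.

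The key step is a \emph{uniform} version of Theorem \ref{thm:Hyp-semi-decideable}. The procedure I plan to use computes a filtration box $V_R$, refines a box cover of the chain recurrent part of $V_R$, and searches for box-wise constant unstable and stable cone fields. It certifies hyperbolicity once it finds a finite collection of boxes $\mathcal{B}$ with four properties: $\mathcal{B}$ covers $J$, or more precisely the chain recurrent set minus isolated attracting cycles, which are handled by separately certified contracting boxes; the image of each box under $f$ lies, up to the computed error, in the interior of the union of its neighbors; $Df$ maps each unstable cone strictly into the cone of each target box with an expansion factor $>1$; and the symmetric conditions hold for $Df^{-1}$. Every one of these conditions is a finite collection of \emph{strict} inequalities, verified by interval arithmetic. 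I will run the certification not on a single map but on a rational parameter box $P$, with interval arithmetic ranging over all of $P$ (using $a\neq 0$ bounded away from $0$ on $P$, so that $f^{-1}$ is controlled). A successful certificate then proves hyperbolicity for every parameter in $P$, so $P\subset\mathcal{U}$. The machine dovetails over all rational boxes $P$ with $0\notin \overline{\pi_a(P)}$ and all depths $n$, and outputs $P$ whenever the certificate succeeds at depth $n$. This gives soundness: every enumerated box lies in $\mathcal{U}$.

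The main obstacle is completeness: every $(a_0,c_0)\in\mathcal{U}$ must lie in some enumerated box. Here I use Theorem \ref{thm:Hyp-semi-decideable} at $(a_0,c_0)$, in its oracle form. The certificate found there reads the parameter only to finite precision $2^{-m}$ and uses only strict inequalities, so by openness the same certificate, run with interval arithmetic over a sufficiently small rational box $P\ni(a_0,c_0)$, still succeeds. The point needing care is that the covering condition relating $\mathcal{B}$ to the chain recurrent set must also be stable under perturbation. I would handle this by phrasing it as forward invariance of the box cover: $f(\bigcup\mathcal{B})$ lands inside the interior of the cover, and every orbit in $V_R$ that stays bounded in both directions enters $\bigcup\mathcal{B}$. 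This is an open condition on the parameters. Hyperbolicity together with the filtration on $V_R$ (whose radius $R$ depends continuously on the parameters) then gives uniformity. Finally, Theorem \ref{thm:Jcomputable} is not needed for lower semi-computability itself; it only supplies, as a byproduct, the computation of $J$ on each enumerated box.
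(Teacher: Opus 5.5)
Your overall plan matches the paper's sketch: dovetail over rational parameter boxes and certify hyperbolicity with error bounds uniformly on each box. However, the step you identify as the crux does not work. You make the covering condition robust by requiring $f(\bigcup\mathcal{B})\subset\mathrm{int}\bigcup\mathcal{B}$ together with $J\subset\bigcup\mathcal{B}$, and no compact set containing $J$ can satisfy this. Indeed $J=f(J)\subset f(\bigcup\mathcal{B})\subset\mathrm{int}\bigcup\mathcal{B}$. So for a saddle point $p\in J$ of period $k$, a local unstable disc $W^u_{\mathrm{loc}}(p)$ lies in $\bigcup\mathcal{B}$. Forward invariance then forces $W^u(p)=\bigcup_{n\geq 0}f^{nk}(W^u_{\mathrm{loc}}(p))\subset\bigcup\mathcal{B}$. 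But $W^u(p)$ is the image of an injective holomorphic immersion $\CC\to\Ct$, hence unbounded by Liouville's theorem. A saddle-type $J$ has isolating neighbourhoods but never trapping ones. With this condition your certificate fails for every map in $\sH_d(a,c)$, and your enumeration is empty.

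There is also a mismatch in your completeness step. The certificate you run over $P$ is one-step cone invariance and expansion of $Df$ along every edge of a box graph. That is not what the machine of Theorem~\ref{thm:Hyp-semi-decideable} produces: it uses an iterate $f^m$, cones anchored at saddle periodic points, and the test $2^{-N}<\Delta$. So ``the certificate found there'' is not a certificate of your type. Moreover, that a one-step box-graph certificate exists at every hyperbolic map is exactly the completeness guarantee the paper says is missing for Hruska-type methods. A single application of $Df$ need not expand in a fixed norm.

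Either of two repairs works.
\begin{itemize}
\item \emph{Uniform outer approximation.} Drop the invariance requirement and build the transition graph as an outer approximation valid simultaneously for all $p\in P$, over a common $V_R$. Then the union of its strongly connected components contains $\cR(f_p)$ for every $p\in P$ automatically. Next, carry the paper's certificate through with error bounds over $P$; this is what the paper's sketch intends.
\item \emph{Finite oracle use.} Avoid perturbation analysis entirely. A halting run of the machine of Theorem~\ref{thm:Hyp-semi-decideable} on an oracle $\phi$ of a hyperbolic $p_0$ queries only finitely many values $\phi(i_1),\dots,\phi(i_r)$. Every $p$ with $a\neq 0$ in the open set $\bigcap_j B(\phi(i_j),2^{-i_j})$ has an oracle agreeing with $\phi$ at these indices. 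So the machine halts on $p$ as well, and by soundness $p$ is hyperbolic. Dovetail over all finite query-answer tables and output dyadic boxes inside the resulting open sets, kept away from $a=0$. This enumerates exactly the hyperbolicity locus, so Theorem~\ref{thm:hyp-locus} follows formally from Theorem~\ref{thm:Hyp-semi-decideable}.
\end{itemize}
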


Lower semi-computability means that there is a Turing machine which outputs a sequence of balls whose union coincides with the set of hyperbolic maps in $\sH_d(a,c)$. This sequence is, in general, infinite, and does not provide,  at any given time, information about the size of the missing hyperbolic maps in $\sH_d(a,c)$. 

We close the paper by providing a few additional computability results that are direct consequences of our algorithms, including that disconnectivity of the Julia set is semi-decidable, as well as some examples of more general settings in which our techniques can be applied, such as regular polynomial diffeomorphisms of $\Cn$.

Now, we present the organization of the sections. Section~\ref{sec:prelim} contains preliminaries and background information, including required results from computability and hyperbolicity for polynomial diffeomorphisms and \Henon mappings. 
Section~\ref{sec:results} contains the primary results of the paper - our algorithms, and the proof of their correctness, establishing Theorem~\ref{thm:Jcomputable} in Section~\ref{sec:Jcomputable}, and Theorem~\ref{thm:Hyp-semi-decideable} (and hence Theorem~\ref{thm:hyp-locus}), in Section~\ref{sec:hypsemidecideable}. 
Section~\ref{sec:otherapps} contains the disconnectivity result (Proposition~\ref{prop:J-disconnected}), and in Section~\ref{sec:generalizations} we provide applications of our techniques to more general settings such as regular polynomial diffeomorphisms of $\Cn$.

\section{Preliminaries and background}
\label{sec:prelim}
In this section, we provide an overview of the basic definitions and properties of the material required in this paper: first on computability in general, and then on hyperbolicity and \Henon mappings.

\subsection{Computability}
\label{sec:compute:basic}

We are interested in the correctness of algorithms that compute dynamically defined subsets of $\bR^\ell$, respectively $\bC^\ell$. We briefly review here the required definitions and 
 refer to 
\cite{
BBRY-2011,
Binder-et-al-2024,
Braverman2005,
BY2009,
Burr-Wolf-2024,
BSW2020,
GHR2011} 
for more detailed discussions of computability theory.  Our approach uses closely related definitions to those in \cite{BY2009} and \cite{BSW2020}. Throughout this discussion, we use a computational model via Turing machines (computer programs for our purposes).  One can think of the set of Turing machines as a particular, countable set of functions; we denote $T(x)$ as the output of the Turing machine $T$ based on input $x$.

We start with the definition of computable points in $\ell$-dimensional Euclidean space. 

\begin{definition}
Let $\ell\in \bN$ and $x\in \bR^\ell$. An \emph{oracle} of $x$ is a function $\phi:\bN\to \bQ^\ell$ such that $\Vert \phi(n)-x\Vert < 2^{-n}$. Moreover, we say $x$ is \textit{computable}
if there is a Turing Machine $T=T(n)$ 
which is an oracle of $x$.
\end{definition} 
It is straightforward to see that rational numbers, algebraic numbers, and some transcendental numbers such as ${ e}$ and $\pi$ are computable real numbers. However, since the collection of Turing machines is countable, most points in $\bR^\ell$ are not computable.
Identifying $\bC^\ell$ with $\bR^{2\ell}$, the notion of computable points naturally extends to $\bC^\ell$.

Next, we extend the notion of computable points to points in computable metric spaces.

\begin{definition}\label{def:computable}
Let $(X,d_X)$ be a separable complete metric space with metric $d_X$, and let $\cS_X=\{s_i: i\in \bN\}\subset X$ be a countable dense subset. We say $(X,d_X,\cS_X)$ is a \textit{computable metric space} if the distance function $d_X(.,.)$ is uniformly computable on $\cS_X\times \cS_X$, that is, if there exists a Turing machine $T=T(i,j,n)$, which on input $i,j,n\in \bN$ outputs a rational number such that
$|d_X(s_i,s_j)-T(i,j,n)|<2^{-n}$.
\end{definition}
The points in $\cS_X$ in Definition \ref{def:computable} are called the \textit{ideal points} of $X$, and $\cS_X$ is the ideal set of the computable metric space. The ideal points assume the role of $\bQ^\ell$ in $\bR^\ell$. We may suppress the subscript $X$ and write $(X,d,\cS)$ instead of $(X,d_X,\cS_X)$ when the context precludes ambiguity.

\begin{definition}
Let $(X,d_X,\cS_X)$ be a computable metric space.
An {\em oracle} for $x\in X$ is a function $\phi$ such that on input $n\in \bN$, the output $\phi(n)$ is a natural number so that $d_X(x,s_{\phi(n)})<2^{-n}$.  Moreover, we say $x\in X$ is {\em computable} if there is a Turing machine $T=T(n)$ which is an oracle for $x$.
\end{definition}

It is easy to see that $(\mathbb{R}^\ell,d_{\mathbb{R}^\ell},\cS_{\mathbb{R}^\ell})$, with $d_{\mathbb{R}^\ell}$ the Euclidean distance on $\bR^\ell$ and $\cS_{\mathbb{R}^\ell}=\mathbb{Q}^\ell$, is a computable metric space.

Next we define computable  functions between computable metric spaces.

\begin{definition}\label{def:computablefunction}
Let $(X,d_X,\cS_X)$ and $(Y,d_Y,\cS_Y)$ be computable metric spaces and  $\cS_Y=\{t_i: i\in \bN\}$.  Let $D\subset X$.  A function $f:D\rightarrow Y$ is {\em computable} if there is a Turing machine $T$ such that for any  $x\in D$ and any oracle $\phi$ of $x$, the output $T(\phi,n)$ is a natural number satisfying $d_Y(t_{T(\phi,n)},f(x))< 2^{-n}$.

\end{definition}

The composition of computable functions is computable because the output of one Turing machine can be used as the input approximation for subsequent machines.  In addition, basic operations, such as the arithmetic operations and the minimum and maximum functions, are computable. See \cite{BHW2008} for more details on these topics.

\subsection*{Computability of sets, and the $L^\infty$ metric}

Next, we introduce the computability of compact subsets of computable metric spaces.
Recall that the {\em Hausdorff distance} between two {compact} subsets $A$ and $B$ of a metric space $X$ is given by
$$
d_H(A,B)=\max\left\{\max_{a\in A}d(a,B),\max_{b\in B}d(b,A)\right\},
$$
where $d(x,C)=\min\{d_X(x,y): y\in C\}$.   In other words, the Hausdorff distance is the largest distance of a point in one set to the other set. 

\begin{definition} \label{defn:2^n-approx}
    A set $\Set_n$ is called a \textit{$2^n$-approximation of $C$} if, in the Hausdorff metric, $d_H(C,\Set_n)\leq 2^{-n}$. 
\end{definition}

\noindent \textbf{Notation.}  
We use the notation $B(s,\delta)$ for the $\delta$-ball about a point $s$, and the notation $\cN(A,\delta)$
for the $\delta$-neighborhood about a set $A$.

\begin{definition}
\label{defn:S_Y}
Let $(X,d_X,\cS_X)$ be a computable metric space.
We say that a ball $B(x,r)$ is an \textit{ideal ball} if $x\in \cS_X$ and $r=2^{-i}$ for some $i \in \bZ$.

 For 
    $\sC= \{C\subset X\,\, {\rm compact}\}$ let $\cS_\sC = \cS_\sC(X)$ denote the collection of finite unions of closed ideal balls. We also denote the sets in $\cS_\sC$ as ideal sets.
\end{definition}

We will use the following well-known property, see e.g., \cite{BY2009}:

\begin{lemma}\label{lem:compact-subsets-computable-metric-space}
Let $(X,d_X,\cS_X)$ be a computable metric space, and let $\sC$ and $\cS_\sC$ be as in Definition~\ref{defn:S_Y}. 
Then 
$(\sC,d_H,S_\sC)$ is a computable metric space.
\end{lemma}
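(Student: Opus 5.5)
To prove Lemma~\ref{lem:compact-subsets-computable-metric-space}, I will check the three requirements of Definition~\ref{def:computable} for the triple $(\sC,d_H,\cS_\sC)$: completeness and separability, density of $\cS_\sC$, and uniform computability of $d_H$ on $\cS_\sC\times\cS_\sC$. I also need a computable enumeration of $\cS_\sC$. For this I list all finite tuples $\bigl((i_1,k_1),\dots,(i_m,k_m)\bigr)$, each coding the closed ideal ball $\overline{B}(s_{i_j},2^{-k_j})$, and take their unions. Since $X$ is complete, $(\sC,d_H)$ is complete; this is the standard fact that the hyperspace of nonempty compact sets of a complete space is complete under $d_H$, so I will take it as known. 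The empty set can be adjoined as an isolated point, or excluded, as usual.

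\textbf{Density.} Let $C$ be compact and $\ep>0$. I choose $k$ with $2^{-k}<\ep/2$ and cover $C$ by finitely many balls $B(s_{i_j},2^{-k})$ with $d(s_{i_j},C)<2^{-k}$. Compactness allows a finite cover, and density of $\cS_X$ lets me pick the centres near $C$. The union $U$ of the corresponding closed balls then contains $C$, and every point of $U$ lies within $2\cdot 2^{-k}<\ep$ of $C$, so $d_H(C,U)<\ep$. One caveat: closed ideal balls must be compact for $U\in\sC$. In $\bR^\ell$ and $\bC^\ell$, the spaces actually used in the paper, this holds. In general one restricts to spaces where closed balls are compact, or interprets $\cS_\sC$ as the corresponding finite sets of centres, which also gives density. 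I will state this restriction explicitly.

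\textbf{Computability of $d_H$.} This is the main obstacle. The Hausdorff distance between two finite unions of balls $U=\bigcup_j\overline B(a_j,r_j)$ and $V=\bigcup_l\overline B(b_l,\rho_l)$ involves a supremum over uncountably many points. My approach is to reduce it to finitely many distance computations between ideal points. Given a precision $n$, the algorithm enumerates a finite $2^{-n-2}$-net of each ball in $U$ and in $V$. In $\bR^\ell$ this is a rational grid intersected with the ball, and membership is decidable up to boundary cases that can be handled with a $2^{-n}$ slack. Call the resulting finite ideal sets $F_U$ and $F_V$. They satisfy $d_H(U,F_U),d_H(V,F_V)\le 2^{-n-2}$, so by the triangle inequality $\abs{d_H(U,V)-d_H(F_U,F_V)}\le 2^{-n-1}$. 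The quantity $d_H(F_U,F_V)$ is a max–min of finitely many values $d(s_i,s_j)$, each computable to within $2^{-n-2}$ by the machine of Definition~\ref{def:computable}. Since max and min are $1$-Lipschitz, the output is within $2^{-n}$ of $d_H(U,V)$.

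In an abstract computable metric space, the delicate point is producing such nets effectively. I would address this by working with the spaces where the paper needs the lemma, namely $\bR^\ell$ and $\bC^\ell$, where the nets are explicit. Alternatively, following \cite{BY2009}, one can take the ideal sets to be finite sets of ideal points, for which $d_H$ is directly a finite max–min and thus evidently computable.
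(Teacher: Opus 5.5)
Your argument is correct in the setting where the lemma is actually used ($\bC^\ell\cong\bR^{2\ell}$ with dyadic centres and radii). The paper itself gives no proof. It only calls the property well known and points to \cite{BY2009}, so your finite-net reduction is a self-contained substitute rather than a variant of an argument in the text.

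What your route buys is that it exposes the hypotheses the citation glosses over. Your caveats are genuinely necessary, because the lemma fails in the literal generality stated. On $\mathbb{N}$ with the discrete metric, the closed ideal ball of radius $2^{1}$ is all of $\mathbb{N}$, so $\cS_{\sC}\not\subset\sC$. Compactness of closed balls does not rescue computability either. Let $A\subset\mathbb{N}$ be infinite, computably enumerable and not computable. Then $X=\{0\}\cup\{2^{-k}:k\in A\}\subset\bR$, with $X$ itself as ideal set, is a compact computable metric space. The sets $U_k=\{y\in X:\abs{y}\le 2^{-k}\}$ satisfy $d_H(U_k,U_{k+1})=0$ for $k\notin A$ but $d_H(U_k,U_{k+1})\ge 2^{-k-1}$ for $k\in A$. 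Hence $d_H$ is not computable on $\cS_{\sC}$ under its natural coding by finite lists of (centre index, radius exponent) pairs.

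Your fallback of using finite sets of ideal points as the dense set does give a correct general statement, since $d_H$ is then a finite max--min. Your net construction also shows that in $\bR^\ell$ the two kinds of ideal sets are effectively interconvertible to any precision. So they define the same computable compact sets, which is all that Corollary~\ref{cor:set-computable} needs.

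One small simplification: with rational data, membership of a grid point in a closed ball is exactly decidable. A fine enough dyadic grid also contains the corners of the paper's dyadic boxes, so no boundary slack is required.
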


While considering the computability of sets in $\bC^\ell$ we slightly deviate from \cite{DY2018}  where the Euclidean metric is used. This is because it  is more natural for computer calculations in $\bC^\ell$
to consider vectors in $\mathbb{R}^{2\ell}$
rather than $\CC^\ell$, and use the $L^{\infty}$ metric, rather than 
Euclidean.

\medskip

\noindent \textbf{Notation.}  
When we write $\norm{\cdot}$ we mean the $L^{\infty}$ 
norm on~$\mathbb{R}^{2\ell}$, so that for a vector $x=(x_1,\ldots,x_\ell)\in \mathbb{C}^\ell,$
\begin{equation}
\label{eqn:boxnorm}
\snorm{x}=\max\{ \abs{\Re(x_j)},\abs{\Im(x_j)} \colon 1\leq j\leq \ell\}.    
\end{equation}
Hence, 
$$d(z,w) = \max \{ | \Re(z_j)-\Re(w_j)|, |\Im(z_j)-\Im(w_j)|: 1\leq j \leq \ell \}, $$ 
and if $z$ and $w$ are in a (closed) box of sidelength $r$, then $d(z,w) \leq r$. 
We may use the simpler notation $\sonorm{\cdot}$ in one complex dimension.

The $L^\infty$ metric is {\em uniformly
equivalent} to the euclidean metric on~$\mathbb{C}^\ell, \enorm{\cdot}$, because
$
\frac{1}{\sqrt{2\ell}} \enorm{x} \leq \snorm{x} \leq \enorm{x}.
$
Neighborhoods are slightly different concerning two uniformly equivalent
norms, but the topology generated by them is the same; thus, they
can practically be used interchangeably.

We may say \textit{box} when we mean a ball around a point in the $L^\infty$ norm.  Analogously to \cite{DY2018}, we consider ideal balls to have dyadic rational side length and dyadic rational center coordinates. Indeed,
for $\ell\in \mathbb{Z}^+,$ we consider the computable metric space $(\bC^\ell,d,\cS)$, where $d$ is the $L^\infty$ metric
and $\cS$ is the set of points $x=(x_1,\ldots,x_\ell)$ with the real and imaginary parts of each of $x_i$ dyadic rationals; i.e., points in $\cD = \{ a/2^b : a\in\bZ, b\in \bN\}$.

From now on, we primarily focus on $\CTwo$ as a domain of our dynamical system, though we consider a higher-dimensional $\CC^\ell$ as a parameter space. 

Similar to \cite{Braverman-Yampolsky-2008} aside from using the $L^\infty$ metric,
applying Definition~\ref{defn:S_Y} and Lemma~\ref{lem:compact-subsets-computable-metric-space} to $\bC^2$ we conclude:

\begin{corollary} \label{cor:set-computable}
A compact set $C \subset \bC^2$ is \text{computable} if there is a Turing Machine $T(n)$ which on input $n \in \bN$, outputs an ideal set $\Set_n$ which satisfies satisfies $d_H(C,\Set_n)\leq 2^{-n};$ i.e., 
$\Set_n$ is a \text{$2^n$-approximation of $C$}.    
\end{corollary}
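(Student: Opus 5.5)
This corollary follows by specializing Lemma~\ref{lem:compact-subsets-computable-metric-space} to $X=\bC^2$. The only thing to check is that the $L^\infty$ metric and dyadic ideal points still give a computable metric space, and that the resulting notion of a computable point of $(\sC,d_H,\cS_\sC)$ unwinds to the stated condition.

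First, I verify that $(\bC^2,d,\cS)$ is a computable metric space in the sense of Definition~\ref{def:computable}. Here $d$ is the $L^\infty$ metric from \eqref{eqn:boxnorm}, and $\cS$ is the set of points whose real and imaginary coordinates lie in $\cD$.
\begin{itemize}
\item $\cS$ is countable, and it is dense because dyadic rationals are dense in $\bR$.
\item $\bC^2$ with $d$ is complete and separable, since $d$ is uniformly equivalent to the Euclidean metric.
\item For $s_i,s_j\in\cS$, the distance $d(s_i,s_j)$ is a maximum of absolute values of differences of dyadic rationals. It is therefore itself a dyadic rational and can be computed exactly from any fixed enumeration of $\cS$. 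This gives the Turing machine $T(i,j,n)$ required in Definition~\ref{def:computable}.
\end{itemize}

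Second, I apply Lemma~\ref{lem:compact-subsets-computable-metric-space}. It tells us that $(\sC,d_H,\cS_\sC)$ is a computable metric space. Its ideal points are the finite unions of closed ideal balls, which in the $L^\infty$ metric are boxes with dyadic centers and radii $2^{-i}$.

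By the definition of a computable point, $C\in\sC$ is computable if there is a Turing machine $T(n)$ whose output indexes an element $\Set_n\in\cS_\sC$ with $d_H(C,\Set_n)<2^{-n}$. This is exactly the condition in the corollary, with $\Set_n$ a $2^n$-approximation in the sense of Definition~\ref{defn:2^n-approx}.

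The statement uses $\le$ where the definition uses strict inequality, and the two versions are equivalent. A machine that outputs $\Set_{n+1}$ on input $n$ satisfies $d_H(C,\Set_{n+1})\le 2^{-n-1}<2^{-n}$, so a $\le$-machine yields a $<$-machine. Conversely, a strict inequality trivially implies the non-strict one.

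The only mildly delicate point is that Lemma~\ref{lem:compact-subsets-computable-metric-space} is usually stated in \cite{BY2009} for the Euclidean metric. Since the lemma is stated here for an arbitrary computable metric space, nothing beyond the first step needs checking. I do not expect a genuine obstacle.
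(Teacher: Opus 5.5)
Your proposal is correct and takes the paper's route. The paper gives no separate proof; it obtains the corollary by applying Definition~\ref{defn:S_Y} and Lemma~\ref{lem:compact-subsets-computable-metric-space} to $\bC^2$ with the $L^\infty$ metric and dyadic ideal points, and then reading off what it means to be a computable point of $(\sC,d_H,\cS_\sC)$. Your check that $(\bC^2,d,\cS)$ is a computable metric space, and your index shift reconciling $\le 2^{-n}$ with $<2^{-n}$, fill in details the paper leaves implicit.
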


As mentioned before, in our setting, the Hausdorff metric is based on the $L^\infty$ metric. 

\smallskip

Finally, to study the locus of maps for which an interesting invariant set has some stable behavior (in our case, the locus of hyperbolicity within the family of generalized \Henon mappings of a fixed degree), we use the following. 

\begin{definition}
\label{defn:LowerComputableSet}
Let $D\subset \CC^{\ell}$ be open. We say $D$ is {\em lower semi-computable} if there exists a Turing machine $T=T(n)$ which on input $n\in \bN$ outputs $(x_n,r_n)$, where $x_n \in \CC^\ell$ has dyadic rational coordinates, $r_n>0$ is a  dyadic radius, and $D = \cup_{n=1}^\infty B(x_n,r_n)$. 
\end{definition}

Note that in the definition of lower semi-computability, we do not require an error estimate for how close any finite union of dyadic balls is to the set $D$, just that it converges in the limit.

\subsection{H\'{e}non dynamics and invariant sets}

Polynomial diffeomorphisms of $\CTwo$ have polynomial inverses,
so are often called polynomial automorphisms.
Friedland and Milnor (\cite{FM}) showed that
polynomial automorphisms of $\CTwo$ fall in two categories.
\textit{Elementary} automorphisms have simple dynamics, and are
polynomially conjugate to a diffeomorphism of the form $(z,w) \mapsto
(az+b, cw+p(z))$ ($p$ polynomial, $a,c \neq 0$). \textit{Nonelementary}
automorphisms are conjugate to
finite compositions of \textit{generalized \Henon
mappings}, of the form $f(z,w) = (p(z)-aw,z)$, where $p(z)$ is a
monic polynomial of degree $d>1$ and $a \neq 0$.

To clarify the situation, one can define a \textit{dynamical degree} of a 
polynomial automorphism of $\CTwo$.  If \textit{deg}$(f)$ is the maximum of the 
degrees of the coordinate functions, the {\it dynamical degree} is 
\begin{equation}\label{dyndeg}
d = d(f) = \lim_{n \to \infty} \textit{deg}(f^{n})^{1/n}. 
\end{equation}
This degree is a conjugacy invariant.  Elementary automorphisms have 
dynamical degree $d=1$.  A non-elementary automorphism is conjugate to some
automorphism whose polynomial degree is equal to its dynamical degree. 
Without loss of generality, we may assume that $f$ is a finite
compositions of generalized \Henon mappings, rather
than merely conjugate to mappings of this form.

Thus, the quadratic, complex \Henon family $H_{a,c} (z,w)=(z^2+c-aw,z)$
represents the dynamical behavior of the simplest class of nonelementary
polynomial automorphisms; those of dynamical degree two.  In this paper, we usually
use the letter $f$ for a polynomial diffeomorphism of $\CTwo$ with
$d(f)>1$, and $H$ for a (often degree two) \Henon mapping.

\medskip

\noindent \textbf{The chain recurrent set.}
The \textit{chain recurrent set}, $\cR(f)$, is $\cR=\cap_{\alpha>0} \cR(\alpha)$, where $\cR(\alpha)$ is the set of all $\alpha$-recurrent points $(x,y)$ of $f$, that is, the set of points admitting an $\alpha$-pseudo orbit starting and ending at $(x,y)$. 

Bedford and Smillie (\cite{BS2})  showed that for $f$ a polynomial 
diffeomorphism of $\Ct$ with $d(f)>1$,  $f$ is hyperbolic  on its Julia set
$J$
iff $f$ is hyperbolic on its chain recurrent set $\mathcal R$ iff
$f$ is hyperbolic on its nonwandering set $\Omega$.
Thus we say $f$ is \textit{hyperbolic} if any of these conditions holds.
In fact, in \cite{BS1} Bedford and Smillie show that if $f$ is hyperbolic, then
$\mathcal R$ and  $\Omega$ are both equal to $J$ union
 finitely many attracting periodic  orbits; and $J=J^*,$ where $J^*$ is defined as the closure of the saddle periodic points.
Thus for hyperbolic polynomial diffeomorphisms of $\Ct$, 
the 
basic sets are $J$ and the attracting periodic orbits.

\medskip

\noindent \textbf{Hyperbolicity.}
Let $f$ be a diffeomorphism of $\Ct$.  If $p$ is a periodic point of 
period $m$, and the eigenvalues
$\lambda, \mu$ of $D_pf^m$ satisfy $\abs{\lambda} > 1 > \abs{\mu}$ (or
vice-versa), then $p$ is a {\em saddle periodic point}.
The eigenvalue with the larger (smaller) norm 
is called the unstable
(stable) eigenvalue.

In the following, let $f$ be a diffeomorphism of a manifold $M$, and let $\Lambda\subset M$ be a compact, $f$-invariant set.  First we recall the standard definition of hyperbolicity (see, e.g., \cite{Rob}):
%
$\Lambda$
is {\em hyperbolic} for $f$ if at each $x$ in $\Lambda$, there is a splitting of the tangent bundle $T_xM
= E^s_x \oplus E^u_x$, which varies
continuously with $x \in \Lambda$, such that:
\begin{enumerate}
 \item $f$ preserves the splitting, \textit{i.e.}, $D_xf(E^s_x) = 
E^s_{fx}$, and $D_xf(E^u_x) = E^u_{fx}$, and 
\item $Df \ (Df^{-1})$ is uniformly expanding on $E^u \ (E^s)$, \textit{i.e.}, there exists a constant $L > 1$ and a norm $\norm{\cdot}_x$ on $T_{\Lambda} M$, continuous for $x \in \Lambda$, for which
\begin{eqnarray*}
&   \text{ if } \mathbf{w} \in E^u_x, \text{ then } 
 \norm{D_x f(\mathbf{w})}_{f(x)} 
 \geq 
 L \norm{\mathbf{w}}_x, 
 \text{ and }\\
& \text{ if } \mathbf{w} \in E^s_x, \text{ then } 
 \norm{D_x f^{-1}(\mathbf{w})}_{f^{-1}(x)} 
 \geq 
 L \norm{\mathbf{w}}_x.
\end{eqnarray*}
\end{enumerate}

%
As noted in the introduction, Newhouse and Palis (\cite{New, NP1}) show hyperbolicity can be described using a {\em cone field}.  To define unstable cones, $C^{u}_x$ at each $x$ in $\Lambda$, we need a
splitting $T_x M = E^u_x \oplus E^s_{x}$, and a
positive real-valued function $\sector_{u}(x)$ on~$M$.
Let $\mathbf{e}^{u/s}_x \in E^{u/s}_x$ be the unit vector in $E^{u/s}_x$, and
 define the
$\sector_{u}(x)$-sector $S_{\sector_u(x)}(E^{u}_{x}, E^{s}_{x})$ by
\begin{equation}    
\label{eqn:cone-defn}
 S_{\sector_u}(x)(E^{u}_{x}, E^{s}_{x})  = 
\{ \mathbf{v} = \mathbf{v}^{u}_x + \mathbf{v}^{s}_x
\in E^{u}_{x} \oplus E^{s}_{x}:
      \norm{\mathbf{v}^s_x} \leq \sector_u(x)\norm{\mathbf{v}^u_x} 
      \},
\end{equation}
where $\mathbf{v}_x^{u/s}
=v^{u/s}_x \mathbf{e}^{u/s}_x 
=\text{Proj}_{E^{u/s}_x}(\mathbf{v})$ for a $\textbf{v}^{u/s}_x \in \CC$ is the projection onto $E^{u/s}_{x}$ of the vector $\mathbf{v} \in T_xM$, 
so
$\mathbf{v} 
= \mathbf{v}^u_x + \mathbf{v}^s_x
= v^u_x \mathbf{e}^u_x + v^s_x \mathbf{e}^s_x 
\in E^u_{x} \oplus E^s_{x}$. 
Thus, 
$\mathbf{v}\in C^u_x$ means 
$|v^s_x| \leq \sector_u(x) |v^u_x|$. 
Here instead of the $L^\infty$-metric, we use the Euclidean norm on the tangent space. 
Then $ S_{\sector_u(x)}(E^u_{x}, E^s_{x}) $
is the sector of vectors ``closer'' to $E^u_x$, as measured w.r.t.\ the weighting $\sector_u(x)$, and 
$C^u_x := S_{\sector_u(x)}$.  We define stable cones $C^s_x$ entirely analogously, keeping the same splitting as was used to define the unstable cones, otherwise swapping $u$'s and $s$'s in the above.

Newhouse and Palis show that $\Lambda$  
is hyperbolic for $f$ iff there is a field of cones $\{ C^{u(s)}_x \subset T_xM \colon x \in \Lambda\}$, 
a constant $L > 1$, and a continuous norm 
$\norm{\cdot}$, such that
 $Df\ (Df^{-1})$ preserves the unstable (stable) cones, \text{i.e.}, $D_x f(C^u_x) \subset C^u_{f x}
 \ ( D_x f^{-1}(C^s_x) \subset C^s_{f^{-1} x}$), and such that in this norm,
$D_x f \ (D_x f^{-1})$ uniformly expands vectors in $C^u_x  \ (C^s_x)$, by at least $L$;
moreover, the field of cones are required to vary continuously with $x$.
(In their proof (\cite{NP1}), they first show that the existence of a
cone field preserved by $Df$ implies the existence of a continuous
splitting preserved by $f$, with the unstable (stable) directions lying
inside the unstable (stable)  cones.)

Since we are covering $\cR$ by a collection of boxes,
we aim to define a piecewise constant conefield such that each $C^u_k\ (C^s_k)$ (unstable(stable) cone) is constant on a box $B_k$,
and $Df\ (Df^{-1})$ preserves and expands the unstable(stable) cones. We have the stable cones $C^s_k$ must lie within the complements of the unstable cones: $C^s_k \subset  \CTwo\setminus C^u_k$. 

In this paper, we use both directions of hyperbolic cone fields. In order to show that the algorithm to detect hyperbolicity halts if the map is hyperbolic (semi-decideability), we have to show that if the map is hyperbolic, then the algorithm will detect it, by finding the piecewise constant, preserved, expanded cones via our algorithm. Conversely, it's a quick application of Newhouse/Palis, if we establish preserved and expanded cone-fields w/ the expansion, to conclude the map is hyperbolic.

\medskip

Since $f$ is a polynomial diffeomorphism, 
recall (as detailed in \cite{Hruska-HenonHyp} for \Henon mappings) we can calculate
a bounding box $V_R =[-R,R]^4$ containing all of recurrent behavior (namely, $\cR$), so that in our algorithm of this article, and all of our box collections lie in $V_R$.  

In order to control some of the error related to discretization of the algorithm, we use the following. 

\begin{lemma} \label{lem:calculus}
Given $f_{a,c}$ a polynomial diffeomorphism
of $\Ct$ of dynamical degree $d(f) >1$, 
let $R>0$ be such that $\cR(f)\subset V_R = [-R,R]^4.$
Let $\sB$ be a finite collection of ideal closed boxes whose union is contained in $V_R$. 
Then for each positive integer $m$, we can compute a (dyadic rational) number $Q$  (i.e., there is a Turing machine $T=T(a,c,\cB,m)$ which outputs a positive (dyadic) rational number 
$Q$) satisfying
    $$
\norm{
\frac{D_x f^m (\mathbf{v})}
{\norm{D_x f^m (\mathbf{v})}} 
- \frac{D_y f^m (\mathbf{v})}{\norm{D_y f^m (\mathbf{v})}} }
\leq Q \norm{x-y},
$$
for any two points $x,y\in V_R$ which lie together in the same box $B \in \sB$, and any \text{unit} vector $\mathbf{v}\in T_x\Ct$.
\end{lemma}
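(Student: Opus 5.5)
The plan is a mean value argument applied to the normalization map, with every constant computed by interval arithmetic over the finitely many boxes of $\sB$.

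\textbf{Step 1: setup.} Fix $m$ and write $g=f^m$. Since $f$ is a composition of generalized \Henon maps with oracle-given coefficients, $g$ is a polynomial map. Its coefficients, and those of the entries of $Dg$ and of the second derivatives $D^2g$, are polynomials in the input coefficients, so they are computable from the oracle for $(a,c)$. Since $f$ is a diffeomorphism, $Dg$ is invertible everywhere. Concretely, $\det D_xf$ is the constant product of the parameters $a$, so $\det D_xg$ is an explicitly computable nonzero constant. For a unit vector $\mathbf{v}$, set
\[
F_{\mathbf v}(x)=\frac{D_xg(\mathbf v)}{\norm{D_xg(\mathbf v)}}.
\]
The goal is a uniform Lipschitz bound for $F_{\mathbf v}$ on each box $B\in\sB$, uniform also in $\mathbf v$.

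\textbf{Step 2: differentiate the normalization.} Each box is convex, so for $x,y\in B$ the segment joining them lies in $B$, and the mean value inequality gives
\[
\norm{F_{\mathbf v}(x)-F_{\mathbf v}(y)}\le \sup_{z\in B}\norm{D_zF_{\mathbf v}}\,\norm{x-y}.
\]
Write $u(z)=D_zg(\mathbf v)$. The derivative of $u/\norm{u}$ in a direction $h$ is bounded in norm by $2\norm{D_zu(h)}/\norm{u}$. This holds for the Euclidean norm; for the $L^\infty$ norm one loses only dimensional constants, using $\frac{1}{\sqrt{2\ell}}\enorm{x}\le\snorm{x}\le\enorm{x}$. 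Moreover $D_zu(h)=D^2_zg(h,\mathbf v)$. The supremum is therefore bounded by
\[
\frac{2\,M_2(B)}{m_1(B)}\quad\text{times a dimensional constant},
\]
where $M_2(B)\ge\sup_{z\in B}\norm{D^2_zg}$ and $m_1(B)\le\inf_{z\in B,\,\norm{\mathbf v}=1}\norm{D_zg(\mathbf v)}$.

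\textbf{Step 3: compute the bounds.} The upper bound $M_2(B)$ is routine. Since $B\subset V_R$ has dyadic vertices, bound each monomial's absolute value by $R$ raised to its degree, using upper approximations of the coefficients' moduli; this gives a dyadic upper bound.

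The main obstacle is the lower bound $m_1(B)>0$, since it must be certified to be strictly positive. I would use invertibility. The inequality $\norm{D_zg(\mathbf v)}\ge \norm{\mathbf v}/\norm{(D_zg)^{-1}}$ holds, and for $2\times2$ complex matrices $(D_zg)^{-1}=\operatorname{adj}(D_zg)/\det D_zg$. Here $\det D_zg$ is the constant $a^m$ (in the \Henon case; in general, a product of the constant Jacobians), so $\abs{\det D_zg}$ has a computable positive lower bound. The adjugate's entries are polynomials bounded above on $V_R$ exactly as in Step 3's first part. Hence
\[
m_1\ge \frac{\abs{\det}}{\sup_{V_R}\norm{\operatorname{adj}D_zg}}
\]
is computably positive, with a dyadic lower bound found by approximating $\abs{a}^m$ from below to sufficient precision, which terminates because $a\neq0$.

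\textbf{Step 4: output.} Take $Q$ to be a dyadic rational exceeding the maximum over $B\in\sB$ of the constant from Step 2, including the norm-equivalence factors. This yields the Turing machine $T(a,c,\sB,m)$. The same argument works with the bound computed over all of $V_R$, so the dependence on $\sB$ is only for sharpness.
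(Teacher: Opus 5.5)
Your proposal is correct and has the same skeleton as the paper's proof. That proof only computes $Q\ge\sup_{y\in V_R}\norm{D^2_yf^m}$ and invokes a multi-dimensional mean value theorem.

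The real difference is how you handle the normalization. A bound on $D^2f^m$ alone controls only the unnormalized difference $\norm{D_xf^m(\mathbf v)-D_yf^m(\mathbf v)}$. Normalizing can amplify this by a factor of order $1/\norm{D_xf^m(\mathbf v)}$, which is large when $\mathbf v$ is close to a contracted direction. Your Step~3 lower bound $m_1\ge\abs{\det Df^m}/\sup_{V_R}\norm{\operatorname{adj}Df^m}$ uses the fact that a polynomial automorphism has constant nonzero Jacobian determinant. This is exactly the ingredient the paper leaves implicit, and it is where invertibility of $Df$ is actually needed.

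The two constants serve different purposes. The paper's bare second-derivative bound gives the unnormalized estimate used in Lemma~\ref{lem:cone-expansion-y}. Yours gives the normalized estimate actually stated here, which is the one used in Lemma~\ref{lem:cone-pres-y}. Outputting the larger of the two covers both.

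Two minor points:
\begin{enumerate}
\item A point of $V_R=[-R,R]^4$ has complex coordinates of modulus up to $\sqrt{2}R$, not $R$. A monomial of degree $k$ should therefore be bounded by $(\sqrt{2}R)^{k}$, not $R^k$.
\item You can avoid differentiating $u\mapsto u/\norm{u}$, which is not smooth if the normalization uses the $L^\infty$ norm. Use the elementary inequality $\norm{u/\norm{u}-w/\norm{w}}\le 2\norm{u-w}/\norm{u}$, valid for any norm, with $u=D_xf^m(\mathbf v)$ and $w=D_yf^m(\mathbf v)$. Then bound $\norm{u-w}$ by the mean value theorem for the polynomial map $z\mapsto D_zf^m(\mathbf v)$. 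This gives $Q=2M_2/m_1$, up to norm-equivalence constants, without differentiating the normalization.
\end{enumerate}
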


\begin{proof}
First, in $V_R$, by calculating second derivatives we can calculate a bound $Q>0$ (which depends on the parameters defining $f$ and the iterate $m$, as well as $R$) such that:
$$
Q \geq \sup_{y\in V_R}\left\{\norm{D^2_{y} f^m }\right\}.
$$

Now using that bound, 
for each box $B$ of $\sB$, if $x,y\in V_R$, we can use a multi-dimensional Mean Value Theorem to conclude the desired inequality.
\end{proof}

\section{Results}
\label{sec:results}

In this section, we provide the proofs of our main results. In subsection~\ref{sec:Jcomputable}, we establish Theorem~\ref{thm:Jcomputable} on computability of $J$  by providing Algorithm~\ref{alg:computeJ} which computes a $2^N$-approximation of $J$ for any given $N$, and then proving (in Proposition~\ref{prop:J-generalized-henon-computable}) that it halts and is correct (i.e., that it does indeed produce a $2^N$-approximation of $J$).
In subsection~\ref{sec:hypsemidecideable}, we establish
Theorem~\ref{thm:Hyp-semi-decideable} on semi-decideability of hyperbolicity---which implies Theorem~\ref{thm:hyp-locus}, lower computability of the hyperbolicity locus in a parameterized family. In subsection~\ref{sec:otherapps} 
(Proposition~\ref{prop:J-disconnected}) we establish the semi-decidability of hyperbolic Julia sets being disconnected.

\subsection{The Julia set of a hyperbolic \Henon map is computable}
\label{sec:Jcomputable}

In the following we consider a polynomial diffeomorphism $f(z,w)=(p(z,w),q(z,w))$, of dynamical degree $d(f)>1$ given by complex polynomials $p(z,w)$ and $q(z,w)$. Algorithmically these polynomials are given by oracles of the coefficients of $p$ and $q$.

To establish Theorem~\ref{thm:Jcomputable},
we show the following:

\begin{proposition}
   \label{prop:J-generalized-henon-computable}
There is a Turing Machine $T=T(N,p,q)$ which on input of $N\in \bN$, and oracles of the coefficients of  $p, q:\Ct\to\CC$ defining the polynomial diffeomorphism $f(z,w)=(p(z,w),q(z,w))$ (of dynamical degree $d(f)>1$), has the following property:
If $f$ is hyperbolic then $T$ halts and outputs  ideal sets $\Set_N$ and  $\Set'_N$ in $\Ct$ such that $\Set_N$ is a $2^{-N}$-approximation of  $\cR(f)$, and $\Set'_N$ is a $2^{-N}$-approximation of  $J$. 
\end{proposition}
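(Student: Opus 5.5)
The plan follows the outline in the introduction. We build a decreasing sequence of box collections converging to $\cR(f)$ in the Hausdorff metric, and we certify when each box contains a point of $J$ using saddle periodic points.

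First, compute $R$ with $\cR(f)\subset V_R$; this can be done from oracles of the coefficients, as in \cite{Hruska-HenonHyp}. For $k=1,2,\dots$, cover $V_R$ by the dyadic grid $\sB_k$ of boxes of side $2^{-k}$. Form the directed graph $G_k$ on $\sB_k$ with an edge $B\to B'$ whenever a rigorously computed outer enclosure of $f(B)$, inflated by a small margin, meets $B'$. Let $\Set^{(k)}$ be the union of the boxes lying on cycles of $G_k$, intersected with $\Set^{(k-1)}$ so that the sequence is nested. Standard arguments give the following:
\begin{itemize}
\item $\cR(f)\subset \Set^{(k)}$ for all $k$, because every chain-recurrent point has $\alpha$-pseudo-orbits for small $\alpha$, and the margin makes these visible in the graph.
\item $\bigcap_k \Set^{(k)}=\cR(f)$, because a box on a cycle of $G_k$ contains points of $\alpha_k$-pseudo-orbits with $\alpha_k\to 0$.
\end{itemize}
By compactness, $d_H(\Set^{(k)},\cR(f))\to 0$. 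What is missing is a certificate telling us when the error is below $2^{-N}$.

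The lower half of the Hausdorff estimate comes from periodic points. For each $m\le k$, we rigorously enclose all solutions of $f^m(x)=x$ in $V_R$; these are isolated, since otherwise hyperbolicity would fail, so interval Newton or Krawczyk methods can eventually isolate each one in a small box. We also certify the saddle ones via rigorous eigenvalue bounds for $D_xf^m$, and the attracting ones, whose both eigenvalues have modulus less than $1$. We accept stage $k$ once two conditions hold:
\begin{itemize}
\item every box of $\Set^{(k)}$ of side $2^{-k}\le 2^{-N-1}$ lies within $2^{-N-1}$ of a certified periodic point of period at most $k$;
\item the boxes near certified attracting cycles are separated from the boxes near saddles.
\end{itemize}
If so, we output $\Set_N=\Set^{(k)}$. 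The saddle-certified boxes, together with their neighbours within distance $2^{-N-1}$, form $\Set'_N$. Correctness follows because saddle points lie in $J$, since $J=J^*$ by \cite{BS1}, and $J\subset \cR(f)\subset \Set^{(k)}$. Hence every point of $J$ is within $2^{-k}$ of the output, and every output box is within $2^{-N}$ of $J$ or $\cR(f)$. The decisive hyperbolic input for $\Set'_N$ is the description of $\cR(f)$ from \cite{BS1}: if $f$ is hyperbolic, then $\cR(f)=J\cup\{\text{finitely many attracting cycles}\}$, and $J$ stays at positive distance from these cycles because they lie in the interior of $K^+$.

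Termination is where hyperbolicity is used. By \cite{BS1}, saddle points are dense in $J$ and $\cR(f)$ is $J$ together with finitely many attracting cycles. So for fixed $N$ there is a finite set of periodic points, of bounded period $m_0$, that is $2^{-N-2}$-dense in $\cR(f)$. Each of these is hyperbolic, hence robustly certifiable at some finite stage. Meanwhile $\Set^{(k)}\subset \cN(\cR(f),2^{-N-2})$ for large $k$, so the acceptance test eventually passes.

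I expect the main obstacle to be the rigorous enclosure and classification of periodic points, and making sure the acceptance criterion cannot be met spuriously. The difficulty is that an enclosure box for $f^m(x)=x$ might appear to contain a saddle without actually containing one. I would therefore require a Krawczyk-type existence-and-uniqueness certificate together with eigenvalue separation $|\lambda|>1>|\mu|$ valid on the whole box. Using Lemma~\ref{lem:calculus} to control derivatives on boxes, hyperbolic periodic points pass this test once the box is small enough, and non-hyperbolic regions simply never pass. Since we only need halting when $f$ is hyperbolic, non-termination otherwise is acceptable.
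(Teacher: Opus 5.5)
\textbf{The $\Set_N$ part.} Your treatment of $\Set_N$ is sound and is essentially the paper's argument. It uses nested box chain recurrent models of $\cR(f)$ and periodic points of growing period and precision. Halting comes from the density of saddle points in $J$ together with $\cR(f)=J\cup\{\text{sinks}\}$. Your acceptance test asks that every box lie within $2^{-N-1}$ of a certified periodic point. The paper instead asks that every box of $\sB_k$ contain a point of $\sP_n$, and then doubles the boxes. Your test is if anything more forgiving, since boxes of a box chain recurrent model need not themselves meet $\cR(f)$.

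\textbf{The gap is in $\Set'_N$.} Your sentence ``every point of $J$ is within $2^{-k}$ of the output'' is justified only for $\Set_N=\Set^{(k)}$. For $\Set'_N$ you discard the boxes assigned to attracting cycles, and nothing certifies that these boxes miss $J$. It is true that $J$ has positive distance $\delta_0$ from the sinks. But the machine does not know $\delta_0$, while your classification scale $2^{-N-1}$ is fixed by the input. If $2^{-N-1}>\delta_0$ (allowed, since $N$ is arbitrary), condition (ii) fails in one of two ways, depending on how it is read.
\begin{enumerate}
\item Suppose (ii) means no box is within $2^{-N-1}$ of both a certified sink and a certified saddle. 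Saddles are dense in $J$, so the box containing a sink eventually lies within $2^{-N-1}$ of a certified saddle as well. Then (ii) never holds and the machine does not halt.
\item Suppose instead boxes are assigned to whichever type is nearby at the current stage. Then a box meeting $J$ that is close to a sink, but not yet close to any certified saddle, is thrown away, and $d_H(\Set'_N,J)\le 2^{-N}$ fails.
\end{enumerate}
Padding by neighbours within $2^{-N-1}$ does not repair this. It can also put points of $\Set'_N$ at distance about $2^{-N}+2^{-k}$ from $J$.

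\textbf{The missing certificate.} You need a certificate that a discarded box contains no point of $J$. For example, around each certified attracting point $a$ with $f^q(a)=a$, verify rigorously (by interval enclosures of $f^q$ on boxes, passing to a power of $f^q$ if needed) that some union of boxes $U\ni a$ satisfies $f^q(U)\subset\operatorname{int}U$. Then $U\subset f^{-q}(\operatorname{int}U)\subset K^+$, so $U\subset\operatorname{int}K^+$. Hence $U\cap J=\emptyset$, because $J\subset\partial K^+$.

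Accept stage $k$ only when every box of $\Set^{(k)}$ is either within $2^{-N-1}$ of a certified saddle or contained in such a certified $U$. Let $\Set'_N$ be the union of the boxes of the first kind. Correctness is then immediate: boxes meeting $J$ cannot be of the second kind, and boxes of the first kind lie within $2^{-N-1}$ of $J$. Halting survives for hyperbolic $f$, because the sinks admit such trapping neighbourhoods at positive distance from $J$ and $\Set^{(k)}\to\cR(f)$.

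\textbf{Comparison with the paper.} The paper splits instead along the strongly connected components of $\Gamma_k$, typing each component by one periodic point in it. That route is dynamical rather than metric. Its written argument, however, only checks that the eigenvalue computation terminates. It does not show that a discarded component is disjoint from $J$ at the halting level, so a certificate of this kind is needed in either formulation.
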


Theorem~\ref{thm:Jcomputable} is an immediate consequence of Proposition \ref{prop:J-generalized-henon-computable} and the definition of the computability of a compact set (see Corollary~\ref{cor:set-computable}).

Our strategy, which begins the same as Boyd and Wolf's \cite{BoydWolf-Skew1, BoydWolf-1Dim}, is to approach the Julia set by examining the chain recurrent set, $\cR$. We produce a collection of boxes covering $\cR$, along with information on the map $f$ acting on those boxes. The corresponding algorithm uses ideas from the work of Boyd (under her former last name, Hruska: \cite{Hruska-HenonHyp, Hruska-HenonJ}), which had as its goal establishing hyperbolicity of some \Henon mappings rigorously using computer program. One of the principal differences between the results of this article and that prior work is that the main theorem of \cite{Hruska-HenonHyp} is: if the algorithm reports the map is hyperbolic, then it is. But, there is no guarantee that the algorithms will succeed if the map is hyperbolic. 
In this paper, we start with some tools of \cite{Hruska-HenonHyp, Hruska-HenonJ} as a foundation, but provide a different algorithm for which we prove that, if the underlying map is hyperbolic, the algorithm will  compute $J$; additionally, we provide an algorithm that will establish hyperbolicity of the map.
Formally, we produce the following. 

\begin{definition} \label{defn:boxchrecmodel}
Let $\cR$ be the chain recurrent set of a map $g \colon \mathbb{C}^\ell \to \mathbb{C}^\ell$.
Let $\Gamma = (\mathcal{V}, \mathcal{E})$ be a directed graph,
with vertex set $\mathcal{V} = \{ B_k \}_{k=1}^m$,  where the sets $B_k$ are closed boxes in $\mathbb{C}^\ell$ with pairwise disjoint interior, and such that the union of the boxes $\cB = \cup_{k=1}^m B_k$ contains $\cR$. 
We further require that there is an edge from $B_k$ to $B_j$ if 
$g(B_k)$ 
intersects $B_j$, that is,  
\[
 \{ (k, j) \colon 
g(B_k)
\cap B_j  \neq \emptyset \} \subset \mathcal{E}(\Gamma).
\]
We also require that  $\Gamma$ is the disjoint union of strongly connected components $\Gamma'_i$, $i=1,\dots,s$; that is, for $i\in 1,\dots,s$ and any $B_k, B_j\in \Gamma'_i$, there is a path in $\Gamma'_i$ from $B_k$ to $B_j$.  

If these properties hold
we say that $(\cB, \Gamma)$ is a {\em box chain recurrent model} of $g$ on $\cR$, and the components $(\cB_i, \Gamma_i)$ are the {\em box chain components} of the model.
\end{definition}

A box chain recurrent model provides an approximation of the dynamics of the map $g$ on $\cR$, 
and the strongly connected components provide approximations of the chain components.  

To establish Proposition~\ref{prop:J-generalized-henon-computable}, we first provide the algorithm which, on input $N,p,q$, produces the ideal sets $\Set_N$ and $\Set'_N$. After providing the algorithm, we prove that if the map is hyperbolic, then the algorithm halts and produces the ideal sets which are  $2^N$-approximations of $\cR,J$, respectively.

\begin{algorithm}[\textbf{Computing $2^N$-approximations of $\cR$ and $J$}]
\label{alg:computeJ}
The basic structure of the algorithm is a loop, through increasing positive integers $n$, in which we  calculate box chain components to deeper levels $\cB_n$,  until we find a large enough $n$ that a halting condition (related to $N$) is reached.
At that point we use $\cB_n$ to define $\Set_N$.

Rather than starting the loop at $n=0$, it turns out we need to start at $n'$ the smallest integer satisfying $2^{n'} > \max \{2^{N+2} R, \frac{2}{R}\}$. (We see why in the proof of the Proposition, following the algorithm). So, the first step is to calculate $R$ for $f$, and this $n'$. 

Next, let $\ell_n$ be any increasing sequence of positive integers. 

Now, for $f=f(p,q)$, suppose we are in our loop at some $n\geq n'$. For this $n$, we invoke the first Algorithm provided in Section 3 of \cite{BoydWolf-Skew1} (there provided for polynomial skew products of $\Ct$, with obvious adaptations for the setting of polynomial diffeomorphisms of $\Ct$ to that algorithm and to the lemmas in Section 2 of the same article).  This produces a box chain recurrent model $(\cB_n, \Gamma_n)$, satisfying Definition~\ref{defn:boxchrecmodel} above, and yields a collection of boxes $\sB_n = \{ B_j \}$, which are a subset of the boxes formed from a $(2^n)^4$ grid on in a trapping region $V(f)=[-R,R]^4$, such that the boxes' union $\cB_n = \cup_j B_j$ contains the chain recurrent set---including
(all of) the saddle component(s) of the chain recurrent set and all of any attracting components.

Examination of the algorithm that produces these boxes shows they are nested upon refinement: $\cB_n$ is produced from subdividing the boxes of $\cB_{n-1}$ by bisecting each side.

See Figure~\ref{fig:Hypatia} for an example of a box cover, and a refinement of that box cover, of $\cR$ for a complex \Henon mapping, generated by an implementation of the algorithm of \cite{Hruska-HenonJ}, which is the basis of the first algorithm  from Section 3 of \cite{BoydWolf-Skew1}.

\begin{figure} \centering
\includegraphics[width=.475\textwidth]{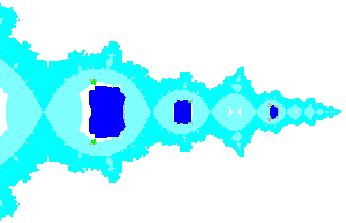}
\includegraphics[width=.475\textwidth]{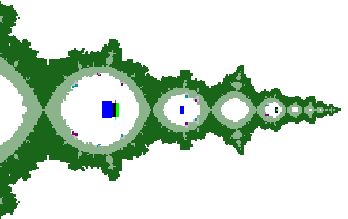}
\caption{\label{fig:Hypatia}
For $f_{a,c}$ the \Henon mapping with  $c=-1.1875, a=.15$, the chain recurrent set is $J$ and a period $2$ attracting cycle.  Shown in this figure are box chain recurrent sets, restricted to the unstable manifold of a saddle fixed point, with its natural parameterization. A heuristic algorithm shades the box chain transitive component containing $J$ in order to illustrate how close the component is to $J$. 
On the left,  boxes are of side length $2R/2^6$ and $2R/2^7$, where $R = 1.9$.  
This is  the crudest box chain recurrent model which separates $J$ from the sink in this algorithm implementation.  
On the right is a refinement obtained from subdividing once  the boxes on the left.}
\end{figure}

Next, for this $n$ we compute all periodic orbits of $f$ of period up to $\ell_n$, with precision $2^{-2n}$ (we observe that this precision is increasing with $n$). This produces a set of (ideal) points $\sP_n$.

Let $2\sB_n$ refer to the collection of boxes with the same centers as the boxes in $\sB_n$, but with double the sidelength. We let $2B_j$ denote such a box. 

Now, while holding $n$ constant, we perform a sub-loop: for each $k$ from $k=n'$ up to $k= n,$ we test whether every box in $\sB_k$ intersects $\sP_n$. 

If  the answer is yes for some $k$, then we stop the algorithm, and
 define $\Set_N:= \cup \{ 2B_j : 2B_j \in 2\sB_k\}$; 
 that is, we define this set to be the union of all of the boxes with centers the same as the boxes in $\sB_k$ (for a special $k$ with $n' \leq k \leq n$) but with double the sidelength.  

\medskip

Next, we decompose the ideal set $\Set_N$ to obtain the ideal set $\Set'_N$ being a $2^{-N}$-approximation of $J$. 

We have the graph $\Gamma_k$ underlying $\sB_k$, and we can decompose it into its strongly connected components,  $\{\Gamma_{n,i}\}$, to partition the collection of boxes $\sB_k$ into box chain transitive components $\{\sB_{k,i}\}$. 

For each box strongly connected component, $\sB_{k,i}$, simply choose any box say $B_0 \in \sB_{k,i}$. We know the double of this box contains a point of $\sP_n$, thus a periodic point of $f$. Calculate the eigenvalues of the iterate of $f$ to the period of that point, to a precision needed to distinguish the norms of the eigenvalues from $1$. (We show below that if $f$ is hyperbolic, the algorithm to calculate these eigenvalue norms away from $1$ will halt).
Once all eigenvalues of a periodic point have been calculated bounded away from 1, 
there are two cases.
If both eigenvalues have norm less than $1$,  that component is of attracting type. Otherwise, they are split with one eigenvalue having norm greater than $1$ and one eigenvalue having norm less than one, to conclude that this component is of saddle type. 

We let $\sP'_n$ denote the subset of $\sP_n$ of periodic points of saddle type, 
we let $\sB'_k$ be the collection of all box chain transitive components of saddle type, and we define 
$\Set'_N = \cup \{ 2B_j: B_j \in \sB'_k \}$; that is $\Set'_N$ is the union of the doubled boxes corresponding to the boxes in the saddle-type components.
\qed 
\end{algorithm}

We next show that this algorithm halts and produces what is claimed in the $f$ being hyperbolic case, thus establishing Proposition~\ref{prop:J-generalized-henon-computable} and hence Theorem~\ref{thm:Jcomputable}.

\begin{proof}[Proof of Proposition~\ref{prop:J-generalized-henon-computable}]
First, by choice of $n'$ s.t.\ $2^{n'} > \max \{2^{N+2} R, \frac{2}{R}\}$,
we have that for all $n\geq n'$, first $2R/2^n < 2^{-N-1}$, hence the sidelength of the boxes in $\cB_n$ satisfies
$\ep_n = 2R/2^n 
< 2^{-N-1}$, 
and  ${R}/{2^{n+1}} > 2^{-2n}.$ 

Thus, the sidelength of the boxes $2B_j$ in $2\sB_n$ is $4R/2^n < 2^{-N},$ since $n\geq n'$.

We claim if for a $k \in \{n',\ldots,n\}$ we have stopped the above algorithm, because every box $\sB_k$ intersects $\sP_n$, then each $2B_j$ in $2\sB_k$ (the set of doubled boxes) contains a periodic point of period at most $\ell_n$.

For, the precision of our approximate periodic points in $\sP_n$ was $2^{-2n}$. This means if $\sP_n$ intersects a box $B_j$ of $\sB_n$,  there is definitely a periodic point (of period at most $\ell_n$) within $2^{-2n}$ of $B_j$. Now the 
radius of $2B_j$ is $R/2^n$ and the radius of $B_j$ is $R/2^{n+1}$, and these boxes have the same center point. 
Thus the distance from the boundary of $B_j$ to the boundary of $2B_j$ is $R/2^n - R/2^{n+1} = R/2^{n+1} > 2^{-2n}=2^{-n-n},$ since $R/2 > 2^{-n} $ for all $n\geq n'$ by choice of $n'$.

If  the algorithm has halted at $n$, then we just established that this collection of (doubled) boxes each contains (for sure) a periodic point. 
Observe by choice of $n'$, we also know that these (doubled) boxes also have sidelength bounded by $2^{-N}$. That means that every point in $\Set_N$  lies in a box $2B_j$ of sidelength $< 2^{-N}$ and this box contains a periodic point. Thus, since the periodic points lie in $\cR$, we know $\Set_N \subset \cN (\cR, 2^{-N})$.

But also, by construction of the boxes $\sB_k$, we know that $\cR$ lies in the union of the boxes: $\cR \subset \cB_k = \cup\{B_j: B_j \in \sB_k\}$. Thus, $\cR$ also lies in the union of the doubled boxes, which is $\Set_N$. 

Hence, $\cR \subset \Set_N \subset \cN (\cR, 2^{-N}),$
and thus $d_H(\cR, \Set_N) < 2^{-N}$. This shows that $\Set_N$ is a $2^{N}$-approximation of $\cR(f).$

As for $\Set_N'$, 
we claim that since $f$ is hyperbolic, we can successfully determine whether the ``hyperbolic type'' of each component is saddle or attracting. 
Indeed, since $f$ is (uniformly) hyperbolic, each periodic point is either saddle or attracting. So, we can increase precision until the norms of the eigenvalues of the finite collection of saddle periodic points and attracting periodic points in $\sP_n$ are bounded away from $1$, so that the algorithm above can continue.

Finally, we must guarantee halting; that is, if $f$ is hyperbolic, then for some $n\geq n'$ and some $k\leq n$ we find every box in $\sB_k$ contains a point of $\sP_n$. This is true since we are calculating the periodic points to increasing period $\ell_n \uparrow \infty$ and increasing precision $2^{-2n}$, and because for the hyperbolic map $f$, the Julia set $J$ is the closure of the saddle periodic points, and the chain recurrent set is $J$ union with finitely many attracting cycles. 
\end{proof}

It follows from the construction above that the ``doubles" of the remaining boxes in $\sB_k \setminus \sB'_k$ are an ideal set $2^{-N}$-approximation of the attracting periodic cycles.

\subsection{Semi-decideability of hyperbolicity for 
polynomial diffeomorphisms of $\Ct$.}
\label{sec:hypsemidecideable}

 We established in the previous subsection that the Julia set of a hyperbolic diffeomorphism of $\Ct$ is computable. The corresponding algorithm does not include a feature to actually detect hyperbolicity, that is the topic of this subsection. Detecting  hyperbolicity of the map is the key ingredient of the proof Theorem~\ref{thm:Hyp-semi-decideable}. 
Specifically, we provide an algorithm which proves  the following. 

\begin{proposition}\label{prop:proof-hypalg}
There is a Turing Machine $T=T(p,q)$, which on input of   oracles of the coefficients of  $p, q:\Ct\to\CC$ defining a polynomial diffeomorphism $f(z,w)=(p(z,w),q(z,w))$ with dynamical degree $d(f)>1$, has the following property.

If $f$ is hyperbolic, then $T$ halts and confirms the hyperbolicity of $f$. 
    Moreover, $T$ outputs $N, m\in\bN$, an ideal set $\Set_N'$ in $\Ct$ which is a $2^{N}$-approximation of $J$, $\lambda>0$,
    and a field of disjoint invariant  unstable (stable) cones $C^u_j \ (C^s_j)$ in the tangent space to each box, one pair of cones per box in $\Set'_N$,
    such that on $\Set'_N,$ the derivative $Df^m \ (Df^{-m})$ is expanding in the unstable (stable) cones by at least $1+\lambda/4$.
\end{proposition}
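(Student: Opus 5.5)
The plan is a dovetailed search over finitely describable certificates, with the stopping test chosen so that a successful check implies hyperbolicity via the Newhouse--Palis cone criterion. Conversely, when $f$ is hyperbolic, some certificate must exist and will eventually be found.

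\textbf{Certificates and their checking.} Run Algorithm~\ref{alg:computeJ} with increasing $N$. Each run that halts yields a box collection $\sB'_k$ with doubled boxes $\Set'_N$ covering the saddle part of $\cR$, together with its graph $\Gamma_k$. For each such $N$, and each pair $(m,M)$ with $M$ a precision parameter, enumerate all assignments of constant splittings with rational coefficients to the boxes $B_j$. Each box gets two complex lines $E^u_j,E^s_j$ (given by dyadic directions) and dyadic cone widths $\sector_u,\sector_s<1$, defining cones $C^u_j,C^s_j$ as in \eqref{eqn:cone-defn}. For every edge path of length $m$ in $\Gamma_k$ from $B_j$ to $B_i$, the algorithm must verify two things rigorously using interval arithmetic:
\begin{itemize}
\item $D_xf^m(C^u_j)\subset \operatorname{int} C^u_i$ with $\norm{D_xf^m\mathbf v}\ge(1+\lambda/4)\norm{\mathbf v}$ for all $x\in B_j$;
\item the analogous statements for $Df^{-m}$ and the stable cones along reversed paths.
\end{itemize}
Only the box centers and the unit vectors spanning the cone boundary need to be checked. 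Lemma~\ref{lem:calculus} gives the Lipschitz constant $Q$, which transfers a strict inequality with margin $>Q\cdot(\text{sidelength})$ from the center to the whole box. Since the cones in $\CC^2$ are compact in projective space, a finite dyadic net of directions plus that same Lipschitz bound suffices. If every check passes, halt. The cones are preserved and expanded for $f^m$ on a neighborhood of $J$, and $f^m$ hyperbolic on $J$ implies $f$ hyperbolic. By Newhouse--Palis, where discontinuous cone fields are allowed, together with Bedford--Smillie's equivalence of hyperbolicity on $J$, $\cR$ and $\Omega$, this confirms hyperbolicity. The attracting components are handled separately by the eigenvalue check already done in Algorithm~\ref{alg:computeJ}.

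\textbf{Completeness.} Suppose $f$ is hyperbolic. Take the continuous splitting $E^s\oplus E^u$ on $J$ and extend it continuously to a small neighborhood $U$ of $J$. Choose $m$ so large that $Df^m$ expands $E^u$ by at least $1+\lambda$ in the Euclidean norm; uniform hyperbolicity with any adapted norm gives this after passing to an iterate. Choose $m$ also so that $Df^m$ maps a cone of width $\sector$ around $E^u_x$ into a cone of width $\sector/4$ around $E^u_{f^mx}$. Both properties persist under small perturbation of the base point and of the lines. Because the $\Set'_N$ eventually lie in $U$ with arbitrarily small boxes, and endpoints of paths of length $m$ in $\Gamma_k$ are within $O(\text{box size})$ of true orbit points, uniform continuity lets us do the following. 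Pick dyadic lines $E^{u/s}_j$ approximating $E^{u/s}$ at the box centers, and use width $\sector$ everywhere. The margins then absorb all discretization and $Q$-errors once $N$ is large, giving expansion at least $1+\lambda/4$. Since the enumeration is exhaustive, this certificate is reached in finite time.

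\textbf{Main obstacle.} The hardest step is controlling the graph: an edge path of length $m$ in $\Gamma_k$ need not be close to a true orbit segment. I would handle this by noting that a path of length $m$ between boxes of size $\ep$ is an $\ep$-pseudo-orbit, so its endpoint is within $C_m\ep$ of $f^m$ of its start, with $C_m$ computable from $\sup\norm{Df}$ on $V_R$. I would then require $N$ large relative to $m$ so that $C_m\ep$ lies below the continuity modulus of the splitting. A second delicate point is that the boxes must lie inside the neighborhood $U$ where the splitting is defined. This follows from the Hausdorff convergence $\Set'_N\to J$ guaranteed by Proposition~\ref{prop:J-generalized-henon-computable}.
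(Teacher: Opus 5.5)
Your argument is correct, granting (as the paper does) the output of Algorithm~\ref{alg:computeJ} and Proposition~\ref{prop:J-generalized-henon-computable}, but it takes a genuinely different route.

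\textbf{How the paper proceeds.} The paper does not search for a cone field. It builds one from the saddle points in $\sP'_n$: in each box it takes the eigenspaces of one representative $x$, with the explicit width $\sector_\lambda$. It proves invariance and $(1+\lambda/2)$-expansion at these points. It then turns the boundary gap $\gamma$ and the constant $Q$ of Lemma~\ref{lem:calculus} into a box-size threshold $\Delta<\min\{\gamma,\lambda/4\}/Q$, and halts once $2^{-N}<\Delta$. Halting for hyperbolic $f$ is justified by saying that $m$ and $\lambda$ eventually stabilize.

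\textbf{How yours differs.} You dovetail over all dyadic piecewise-constant cone fields, verify them along every length-$m$ path of $\Gamma_k$, and get completeness from the true continuous splitting near $J$. The paper's construction is explicit and cheap, and it ties the certificate to periodic data already computed. Yours costs a blind enumeration, but it handles several points more carefully:
\begin{itemize}
\item The path check compares the cone at $y$ with the cone of the box that actually contains $f^m(y)$. Lemma~\ref{lem:cone-pres-y} instead compares with $C^u_{f^m(x)}$, which need not be the cone assigned to that box.
\item Your net over the whole projectivized cone covers the circle of complex boundary directions in $\CTwo$, not just a single vector $\mathbf{b}_x$.
\item Expansion in the cone can come from taking $m$ large. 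The paper instead uses $\norm{v^u_xD_xf^m(\mathbf{e}^u_x)+v^s_xD_xf^m(\mathbf{e}^s_x)}\ge\norm{v^u_xD_xf^m(\mathbf{e}^u_x)}$, which requires orthogonality.
\item Fixing $m$ before shrinking the boxes makes $Q=Q_m$ a constant, so no stabilization of $m,\lambda,\gamma$ has to be argued.
\end{itemize}

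\textbf{Details to tighten.}
\begin{itemize}
\item The statement asks for cones and expansion on the doubled boxes of $\Set'_N$, so run your checks for $x\in 2B_j$. Completeness is unaffected.
\item Your pseudo-orbit bound needs each computed edge $B_j\to B_i$ to satisfy $\operatorname{dist}(f(B_j),B_i)=O(\text{box size})$. Definition~\ref{defn:boxchrecmodel} only requires the edge set to contain the true intersections.
\item The direction net needs its own Lipschitz constant in $\mathbf{v}$; $Q$ only controls variation in the base point.
\item $\lambda$ should be part of the enumeration. In the completeness argument, $\sector$ must be small relative to $\lambda$ (or $m$ large) so that expansion on $E^u$ passes to the whole cone.
\end{itemize}

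\textbf{Caveat on attracting components.} Saying these are handled by the eigenvalue check of Algorithm~\ref{alg:computeJ} is harmless for the proposition as stated. It is not enough for the converse direction needed in Theorem~\ref{thm:Hyp-semi-decideable}. The eigenvalues of one periodic point do not certify a whole box component, so a misclassified component could hide part of $J$. A verified contraction check for $Df^m$ on those components would let your certificate cover all of $\cR\subset\cB_k$. The paper's algorithm relies on the same classification.
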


As before, the initial step in our approach is Algorithm~\ref{alg:computeJ}. 

\begin{algorithm}[\textbf{Detecting hyperbolicity for a hyperbolic polynomial diffeomorphism of $\Ct$}]
\label{alg:hyp-detect}

Consider a  polynomial diffeomorphism $f(z,w)=(p(z,w),q(z,w))$, of dynamical degree $d(f)>1$, defined by oracles of the coefficients of the polynomials $p, q : \Ct \to \CC$.
The following algorithm loops through an increasing sequence of positive integers $N$.

\underline{\textbf{Step 1}}: In the first step, for given $N\in \bN$ and given oracles of the coefficients defining  $f$ we apply Algorithm~\ref{alg:computeJ}, which calculates an $R>0$ (which is determined only by $f$), which sets the bounding box $V_R$ used throughout the remaining steps. The input $N$ determines an approximation accuracy $2^N$,  
which then determines a smallest integer $n'$ satisfying $2^{n'} \geq \max( 2/R, 2^{N+2}R)$. 
The integer $n'$ is the starting parameter in the loop of Algorithm~\ref{alg:computeJ}, which we run through an increasing sequence of integers $n\geq n'$ until we have (guaranteed for a hyperbolic map) determined an $n\geq n'$ and a $k$ in $\{n',\ldots,n\}$ and a collection of boxes $\sB'_k$ (each lying in $V_R$) such that the union of the boxes in $2\cB'_k$ (where recall the 2 indicates ``doubles'', boxes with the same centers but double the sidelength of $\cB'_k$) is a $2^N$-approximation of $J$, and such that each box $2B \in 2\sB'_k$ is guaranteed to contain a saddle periodic point of period at most $\ell_n$ (where $\ell_n$ is  an increasing sequence calculated in the initial steps of Algorithm~\ref{alg:computeJ}), and we have produced a set of points $\sP'_n$ of approximations to these saddle periodic points in $2\cB'_k$ of period up to $\ell_n$, calculated to precision $2^{-2n}$. We recall that $k$ and $n$ depend on $N$.

\underline{\textbf{Step 2}}: 
After completing Algorithm~\ref{alg:computeJ} for $N$,
 we compute $m\in\bN$ and  $\lambda>0$ such that
 \begin{equation}\label{eqn:define-m-lambda-unstable}
\norm{D_x f^m(\mathbf{e}^u_x)} \geq 1 + \lambda
\end{equation}
holds for all saddle periodic points $x\in \sP'_n$ and all $\mathbf{e}^u_x\in E^u_x$ with $\norm{\mathbf{e}^u_x}=1$. Without loss of generality we require \eqref{eqn:define-m-lambda-unstable} also to hold for all points on the (finite) orbits of $x\in\sP'_n$.
We conclude that $\norm{D_x f^m\vert_{E^u_x}} \geq 1 + \lambda$ holds for all those points.
We note that this computation can be accomplished by simultaneously testing increasing iterates $m$ and decreasing expansion rates $\lambda$  and at the same time computing $\norm{D_x f^m(\mathbf{e}^u_x)}$ and $ \norm{D_xf^m|_{E^u_x}}$ at increasing precision. This computation halts  if $f$ is hyperbolic because in this case \eqref{eqn:define-m-lambda-unstable}  holds for some $m\in\bN$ and some $\lambda>0$ for all $x\in J$. By performing the analogous computation for $f^{-1}$ and $E^s$ (also making sure that $m$ and $\lambda$ work for $f$ and $f^{-1}$) we can assure that
\begin{equation}
    \label{eqn:define-m-lambda-stable}
\norm{D_x f^{-m}(\mathbf{e}^s_x)} \geq 1 + \lambda,
\text{ hence } \norm{D_xf^{-m}\vert_{E^s_x}}\geq 1 + \lambda
\end{equation}
for all $x\in \sP'_n$ and all $\mathbf{e}^s_x\in E^s_x$ with $\norm{\mathbf{e}^s_x}=1$.

By executing this algorithm with increasing $N$, we compute a sequence of increasing iterates $m_N$ and decreasing expansion constants $\lambda_N$. In the following, for readability, for a specific $N$ we do not include the subscript on  $m$ and $\lambda$. 

\underline{\textbf{Step 3}}: 
For the determined $m$ and $\lambda$ (depending on $N$),  we define a weighting constant 
\begin{equation}
\label{eqn:defn-sector}
\sector = \sector_\lambda :=\frac{1+\lambda}{1+ \frac{\lambda}{2}}-1    
\end{equation}
which will define the unstable (hence stable) cones in each box, see equation~\eqref{eqn:cone-defn}.
That is,
for each box $B_j$ in $\sB'_k$ in turn, consider an $x\in \sP'_n \cap B_j$. (If $B_j$ contains more than one point of $\sP'_n$, we simply choose one.)
Since $x$ is a saddle periodic point,  we may compute unit basis vectors in $E^u_x$ and $ E^s_x$ at any desired precision. We need to compute the basis vectors at a   larger precision than $2^{-2n}$, and then in the remaining estimates, make the constants slightly larger or smaller as appropriate to guarantee the required precision.
We define the unstable cone $C^u_x$ (this will be the unstable cone for the box $2B_j$ associated with $x$, which we also may denote by $C^u_j$), as 
\begin{equation}
\label{eqn:our-unstable-cone-defn}
    C^u_x = S_{\sector_\lambda}(E^u_x, E^s_x) = 
\{ \mathbf{v} 
\in E^u_{x} \oplus E^s_{x}:
      \norm{\mathbf{v}^s_x} \leq \sector_\lambda \norm{\mathbf{v}^u_x} 
      \},
\end{equation}
where $\mathbf{v}^{u(s)}_x$ is
the projection of a vector $\mathbf{v}\in T_x\Ct$ onto $E^{u(s)}_x.$  It turns out this $\sector_\lambda$ is precisely what is needed to guarantee that expansion by $1+\lambda$ on the stable and unstable spaces translates into the slightly weaker expansion by $1+\lambda/2$ in the cones.

Since we are using a common $m,\lambda$ for the stable iterate and expansion bound, the stable cone $C^s_x$ is defined analogously, and with the same weighting function $\sector_\lambda$:
\begin{equation}
\label{eqn:our-stable-cone-defn}
    C^s_x = S_{\sector_\lambda}(E^s_x, E^u_x) = 
\{ \mathbf{w} 
\in E^u_{x} \oplus E^s_{x}:
      \norm{\mathbf{w}^u_x} \leq \sector_\lambda \norm{\mathbf{w}^s_x} 
      \}.
\end{equation}

Note we must have $C^s_x$ inside of the complement of the unstable cone $C^u_x$. A straightforward calculation shows, since $\lambda>0$, we have 
$\sector_\lambda \in (0,1)$, with $\sector_\lambda\to 1$ as $\lambda\to \infty$ and $\sector_\lambda\to 0$ as $\lambda \to 0$. Since $\sector_\lambda < 1$, 
the stable and unstable cones at a point $x$ do not intersect except at $\mathbf{0}$.

We will show in the proof of Proposition~\ref{prop:proof-hypalg} below  that defining the cones in this way guarantees strict invariance of the unstable(stable) cones under $Df^{m}$ $(Df^{-m})$ (e.g., for every cone $C^u_x$ we have $D_xf^m(C^u_x) \subsetneq C^u_{f^m(x)} $ and analogously for $C^s_x$ under $Df^{-m}$), as well as expansion by at least $1+\lambda/2$ in the unstable (stable) cones, under $Df^m \ (Df^{-m})$.

\medskip
\underline{\textbf{Step 4}}: 
Since the required properties of the computed cones work at the finite collection of saddle periodic points, the next task is to calculate an upper bound for the box size, so that for every point in each box, the same cone works. That is, we claim that if $f$ is hyperbolic, then if the boxes are sufficiently small, for any $y$ in the same box as an $x\in \sP'_n$, we can use the same definition for $C^u_y$ as for the cone $C^{u(s)}_x$ (i.e., we still have preservation and expansion under $D_y f^m$ of $C^u_y$ if it's defined based on the eigenspaces at $x$ and the same $\sector_\lambda$). 

To find a suitable upper bound for our box size, we must quantify how to measure (and thus keep  away from $0$) by some normalized distance between the image under $D_xf^m$ of the boundary of the unstable cone at $x$, and the boundary of the unstable cone at $f^m(x)$. 
We do this as follows. For each $x\in \sP'_n$, we chose a unit vector $\mathbf{b}_x \in T_x\Ct$   in $\partial C^u_x$. Hence $\mathbf{b}_{f^m(x)} \in T_{f^m(x)}\Ct$ is a unit vector in the boundary $\partial C^u_{f^m(x)}.$ 
We want to find a positive, lower bound (uniform over all $x\in \sP'_n$) for the distance between the unit vector $\mathbf{b}_{f^m(x)}$ in $\partial C^u_{f^m(x)}$  and an appropriate unit vector in
$D_xf^m(\partial C^u_x)$, which is also the line of  $D_x f^m(\mathbf{b}_x)$. In fact, since $D_xf^m$ is expanding in the unstable cones, and $\mathbf{b}_x$ is a unit vector, we know $\norm{D_xf^m(\mathbf{b}_x)}>1,$ which shows that this vector has a larger norm than the unit vector(s) on its same complex line,
see Figure~\ref{fig:cone-perturb}.

\begin{figure}[h] \centering
\includegraphics[width=\textwidth]{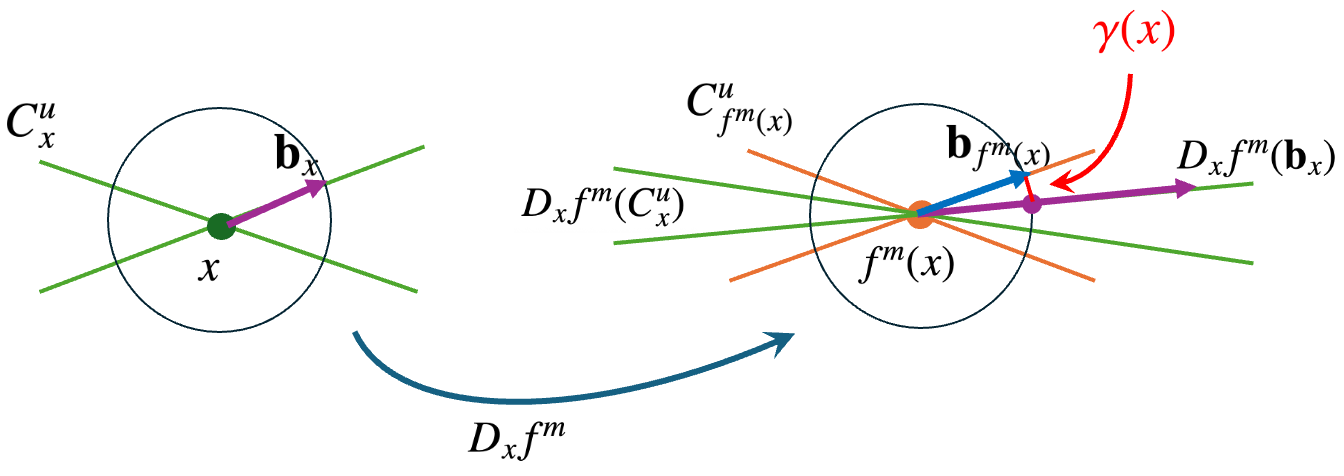}
\caption{\label{fig:cone-perturb}
\textit{Left}: An illustration of an unstable cone $C^u_x$ based at a point $x$, and a unit vector $\mathbf{b}_x$ in the cone's boundary $\partial C^u_x$. As these objects are all complex, the illustration is in the real case.
$D_xf^m$ maps the left figure onto the right.
\\ \textit{Right}: $C^u_{f^m(x)}$, the unstable cone at $f^m(x)$, is shown containing the image under $D_x{f^m}$ of the figure on the left: the images of the cone $C^u_x$ and the vector $\mathbf{b}_x$. Additionally, on the right figure we indicate the unit vectors $\mathbf{b}_{f^m(x)}$ in $\partial C^u_{f^m(x)}$ and $\gamma(x)$ the distance between this  $\mathbf{b}_{f^m(x)}$ and a unit vector in a line 
 $D_x{f^m}(\partial C^u_x)$ (which lies in the same complex line as $D_xf^m (\mathbf{b}_x)$). }
\end{figure}
Thus, for each $x\in \sP'_n$, we calculate 
the distance:
\begin{equation}
    \label{eqn:defn-gamma-x}
\gamma(x) := \norm{ 
\frac{D_xf^m(\mathbf{b}_x)}{\norm{D_xf^m(\mathbf{b}_x)}} 
- \mathbf{b}_{f^m(x)}
},
\end{equation}
where of course the resulting $\gamma$ depends not only on $x$ but also $m$.

Because the unstable cones are strictly preserved by $Df^m$, we conclude that $\gamma(x)>0$. This is because for each $\mathbf{b}_x$ which lies in the (closed) cone $C^u_x$, its image under $D_xf^m$ lies strictly inside of $C^u_{f^m(x)},$ thus so does the unit vector on the same line, which in turn is thus a positive distance from a unit vector 
$\mathbf{b}_{f^m(x)}$ in the boundary of $C^u_{f^m(x)}.$

Since $D_xf^{m}\vert_{E^{u/s}_x}$ is a linear map on a complex line, it suffices to compute the distance between two unit vectors to normalize this distance calculation.
Next, we can define 
\begin{equation}
    \label{eqn:defn-gamma}
\gamma:= \min\{ \gamma(x): x\in \sP'_n\},
\end{equation}
and note that $\gamma>0$ holds since the set of $x$'s is finite, and each $\gamma(x)>0$. As before, $\gamma$ depends on and $m$ and  $f$.

\underline{\textbf{Step 5}}: 
Next, we apply Lemma~\ref{lem:calculus} to $f$, the calculated $V_R$ (which is independent of $m$), the finite set of $x\in \sP'_n$ of saddle periodic points, each in a box $2B_j$ in 
 our collection of boxes $2\sB'_k$ (such that the union of the boxes in $2\sB'_k$ is a $2^N$-approximation of $J$),
and our selected unit vectors $\mathbf{b}_{x}\in T_x\Ct$,
so that for our positive integer $m=m_N$, we  compute a (dyadic rational) $Q$ satisfying
\begin{equation}
\label{eqn:defn-Q}
\norm{
\frac{D_x f^m (\mathbf{b}_x) }{\norm{D_x f^m (\mathbf{b}_x)} }
-\frac{D_y f^m (\mathbf{b}_x)}{\norm{D_y f^m (\mathbf{b}_x)}} 
}
\leq Q \norm{x-y},    
\end{equation}
for any $y$ in the box $2B_j$ containing $x$. 
We then compute a (dyadic rational) $\Delta>0$ (which  depends on $m$, $\lambda$ and $f$)
satisfying
\begin{equation}
\label{eqn:defn-Delta}
\Delta < \frac{1}{Q} \min \left\{\gamma,\frac{\lambda}{4}\right\}.
\end{equation}
Here, we calculate  $\Delta$  as large as possible satisfying \eqref{eqn:defn-Delta} and at a sufficient precision to compute this step in the algorithm.

We will show in the proof of Proposition \ref{prop:proof-hypalg} that
 if $y$ is in a $\Delta$-neighborhood of $x$, for some $x\in \sP'_n$, and for the unstable (stable) cones $C^{u(s)}_y$ defined identical as $C^{u(s)}_x$, 
 then the following two properties hold (i) $D_yf^m \ (D_y f^{-m}) $ preserves the cone fields (which uses $Q\Delta < \gamma)$), and  (ii) $D_yf^m \ (D_y f^{-m})$ expands by at least $1+\lambda/4$ all vectors in the cones $C^{u(s)}_y$ 
(which uses $Q\Delta < \lambda/4$). If $y$ is within $\Delta$ of more than one $x\in \sP'_n$, then any choice of $C^{u(s)}_y$ suffices.

\medskip

\underline{\textbf{Step 6}}: 
Given $x\in \sP'_n$ and $y$  in the same box as x, that is $y\in 2B_j\in 2\sB'_n$, 
we have
$\norm{x-y} < 2^{-N}$.
Since there is a  saddle periodic point in each doubled box and since the cones will be the same on the doubled boxes, we require $2^{-N} < \Delta$.
Therefore, at the integer $N$ used in  the loop, we have determined the iterate $m$ and the expansion constant $\lambda>0$ to get  the preservation and strict expansion by $Df^m$ of the unstable cones (similarly stable under $Df^{-m}$). This $m,\lambda$ in turn determined $\Delta$.
At this point, we algorithmically check if
\begin{equation}\label{eqwer1}
2^{-N} < \Delta
\end{equation}
holds by computing $\Delta$ at precision $2^{-2N}$. If yes, we halt the algorithm and report that $f$ is hyperbolic. 
It follows from the construction that  the cones  $C^{u(s)}_y$ defined the same as $C^u_x$ if $y$ is in the same (doubled) box as $x\in \sP'_n$, the derivative $Df^m_y$ preserves the cone field for all $y\in 2\sB'_n$, which in return proves that $f$ is hyperbolic. 
On other hand, if we are not able to conclude $\Delta > 2^{-N}$ from the approximation of $\Delta$, we move  to the next $N$, and start the main loop at Step 1, to refine and work with smaller boxes. 
\qed
\end{algorithm}

We will show in the proof of Proposition~\ref{prop:proof-hypalg} below that if $f$ is hyperbolic, then algorithm \ref{alg:hyp-detect} halts for some integer $N$, thus establishing that $f$ is hyperbolic. This is because as $N$ increases,  the box size decreases, and the iterate $m$, the expansion lower bound $\lambda$, and the box size bound $\Delta$ stabilize, so even though $m_N$ increases with $N$ and $\lambda_N$ decreases with $N$, and $\Delta$ depends on $m$ and $\lambda$, for sufficiently small box size the algorithm will eventually find an $N$ for which  $2^{-N} < \Delta$ holds, in which case the algorithm halts, and establishes hyperbolicity. 
     
For readability,  the proof of Proposition~\ref{prop:J-generalized-henon-computable} is split up into  several lemmas which provide justifications and/or rationales for the calculations and/or decisions in  Algorithm \ref{alg:hyp-detect}.

\begin{lemma} \label{lem:cone-pres}
    Let $f$ be a hyperbolic polynomial diffeomorphism of $\Ct$ of dynamical degree $d(f)>1$ given by oracles of the coefficients of $f$.
    Then, the cone fields defined in  equations~\eqref{eqn:our-unstable-cone-defn} and ~\eqref{eqn:our-stable-cone-defn} of Step 3 in
Algorithm \ref{alg:hyp-detect} guarantee strict invariance of the unstable(stable) cones under $Df^{m}\  (Df^{-m})$, i.e., for all $x\in \sP'_n$  we have $D_xf^m(C^u_x) \subsetneq C^u_{f^m(x)} $, and the analogous invariance for $C^s_x$ under $D_xf^{-m}$.
\end{lemma}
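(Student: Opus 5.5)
We work with the splitting $T_x\Ct=E^u_x\oplus E^s_x$ at each saddle periodic point $x\in\sP'_n$. By invariance of the hyperbolic splitting, $D_xf^m$ maps $E^u_x$ onto $E^u_{f^m(x)}$ and $E^s_x$ onto $E^s_{f^m(x)}$. Since these are complex lines, $D_xf^m$ acts on each of them as multiplication by a complex scalar. So, writing $\mathbf{v}=v^u\mathbf{e}^u_x+v^s\mathbf{e}^s_x$, there are scalars $\alpha,\beta\in\CC$ with
\[
D_xf^m(\mathbf{v})=\alpha v^u\,\mathbf{e}^u_{f^m(x)}+\beta v^s\,\mathbf{e}^s_{f^m(x)}.
\]
This is legitimate because the orbit of $x$ is finite and consists of saddle points, and $f^m(x)$ again lies in $\sP'_n$ or on a finite orbit of a point of $\sP'_n$. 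By \eqref{eqn:define-m-lambda-unstable} we have $|\alpha|\geq 1+\lambda$. Applying \eqref{eqn:define-m-lambda-stable} at the point $f^m(x)$ gives $|\beta|^{-1}\geq 1+\lambda$, that is $|\beta|\leq (1+\lambda)^{-1}$. This is exactly why Step 2 required the bounds to hold along the whole finite orbits.

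For invariance, take $\mathbf{v}\in C^u_x\setminus\{\mathbf{0}\}$, so $|v^s|\leq\sector_\lambda|v^u|$; note this forces $v^u\neq 0$. The image has stable coordinate $\beta v^s$ and unstable coordinate $\alpha v^u$, hence
\[
|\beta v^s|\leq \frac{\sector_\lambda}{(1+\lambda)^2}\,|\alpha v^u| < \sector_\lambda\,|\alpha v^u|.
\]
Therefore $D_xf^m(\mathbf{v})$ lies in the interior of $C^u_{f^m(x)}$, away from $\mathbf{0}$. Strictness of the inclusion follows because the image misses the boundary vectors with $|w^s|=\sector_\lambda|w^u|\neq 0$. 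For example, $\mathbf{e}^u_{f^m(x)}+\sector_\lambda\mathbf{e}^s_{f^m(x)}$ lies in $C^u_{f^m(x)}$ but is not in the image, since $D_xf^m$ is a bijection and its preimage has coordinates $(\alpha^{-1},\sector_\lambda\beta^{-1})$, which violate the cone inequality.

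The stable case is symmetric, using $D_xf^{-m}$. It acts by the scalars $\beta'$ on $E^s$ and $\alpha'$ on $E^u$, with $|\beta'|\geq 1+\lambda$ from \eqref{eqn:define-m-lambda-stable} and $|\alpha'|\leq(1+\lambda)^{-1}$ from \eqref{eqn:define-m-lambda-unstable} applied at $f^{-m}(x)$. The same computation then gives $D_xf^{-m}(C^s_x)\subsetneq C^s_{f^{-m}(x)}$.

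The main obstacle is a norm issue. Equation \eqref{eqn:cone-defn} uses the Euclidean norm on the coordinates $v^{u/s}$ relative to the unit vectors $\mathbf{e}^{u/s}$, but $E^u$ and $E^s$ need not be orthogonal. The projections $\mathbf{v}^{u/s}$ must therefore be understood as the oblique coordinates in the (non-orthogonal) basis $\mathbf{e}^u,\mathbf{e}^s$, and the scalars $\alpha,\beta$ must be read off in the normalized bases at $x$ and at $f^m(x)$. Since the splitting is preserved, the unit vectors map to multiples of unit vectors, so this causes no problem. A secondary issue is that the eigendirections are only computed approximately, to precision finer than $2^{-2n}$. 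Because the inequalities above are strict with the margin factor $(1+\lambda)^{-2}$, a small perturbation of the basis vectors, absorbed into slightly adjusted constants as described in Step 3, preserves strict invariance.
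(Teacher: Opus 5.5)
Your proof is correct and follows essentially the same route as the paper. You write $D_xf^m$ diagonally along the invariant splitting, bound the unstable coefficient below by $1+\lambda$ via \eqref{eqn:define-m-lambda-unstable} and the stable coefficient above by $(1+\lambda)^{-1}$ via \eqref{eqn:define-m-lambda-stable} at $f^m(x)$, and conclude from $(1+\lambda)^2>1$ that nonzero cone vectors land in the interior of $C^u_{f^m(x)}$, with your checks that $v^u\neq 0$ and that a boundary vector such as $\mathbf{e}^u_{f^m(x)}+\sector_\lambda\mathbf{e}^s_{f^m(x)}$ is missed (giving $\subsetneq$) being small clarifications the paper leaves implicit.
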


\begin{proof}
Recall that at this stage in the algorithm, we had determined $m$ and $\lambda$ such that
for every $x\in \sP'_n$ (our saddle periodic points with period smaller or equal than $\ell_n$),  we have that $D_xf^m\ (D_xf^{-m})$ expands unit vectors in the unstable(stable) eigenspaces of $T_x\Ct$ by at least $(1+\lambda)$, see equation~\eqref{eqn:define-m-lambda-unstable}: 
$\norm{D_xf^m(\mathbf{e}^u_x)} \geq 1 + \lambda$,
and equation~\eqref{eqn:define-m-lambda-stable}: 
$\norm{D_x f^{-m}(\mathbf{e}^s_x)} \geq 1 + \lambda.$

To show preservation of the unstable cones, consider $\mathbf{v}\in C^u_x$. 
Then $\norm{\mathbf{v}^s_x} \leq \sector_\lambda  \norm{\mathbf{v}^u_x}$.
%
%
We need to show that $D_xf^m(\mathbf{v}) \in (C^u_{f(x)})^\circ,$ so we need to show that
\begin{equation} \label{eqn:projection}
\norm{\text{Proj}_{E^s_{f^m(x)}}(D_xf^m(\mathbf{v}))} < \sector_\lambda \norm{\text{Proj}_{E^u_{f^m(x)}}(D_xf^m(\mathbf{v}))}.    
\end{equation}

We first look at the right hand side. 
Since $D_xf^m$ is linear, 
$D_xf^m(\mathbf{v}) 
= D_xf^m(v^u_x \mathbf{e}^u_x + v^s_x \mathbf{e}^s_x) 
= 
v^u_x D_xf^m(\mathbf{e}^u_x) + v^s_x D_xf^m(\mathbf{e}^s_x)$. 
But since $x$ is a saddle periodic point, we have 
$D_xf^m(\mathbf{e}^{u(s)}_x)$ lies on $E^{u(s)}_{f^m(x)}$. By the expansion guaranteed in equation~\eqref{eqn:define-m-lambda-unstable},
we have that the length of $D_xf^m(\mathbf{e}^u_x)$
(which again lies in $E^u_{f^m(x)}$) 
is at least $1+\lambda$, hence
$$|v^u_x| (1+\lambda) \leq \norm{\text{Proj}_{E^u_{f^m(x)}}(D_xf^m(\mathbf{v}))}.$$
%
Since $\mathbf{v}\in C^u_x$, it follows that
$ |v^s_x| \leq \sector_\lambda |v^u_x|,$
and we conclude that
$$
|v^s_x| (1+\lambda) \leq \sector_\lambda |v^u_x| (1+\lambda)
\leq \sector_\lambda\norm{\text{Proj}_{E^u_{f^m(x)}}(D_xf^m(\mathbf{v}))}.
$$
Next,  we consider the left hand side of the desired inequality of equation~\eqref{eqn:projection}. 
%
Observe that $D_xf^m(\mathbf{e}^{s}_x)$ lies in $E^{s}_{f^m(x)}$,
We conclude that the norm of $D_{f^m(x)} f^{-m}(\mathbf{e}^s_{f^m(x)})$ is at least $1+\lambda$. This shows that the norm of $D_x{f^m (\mathbf{e^s}_x) }$ must be at most $1/(1+\lambda)$, and this vector lies in $E^s_{f^m(x)}$.
Thus
$$
\norm{\text{Proj}_{E^s_{f^m(x)}}(D_xf^m(\mathbf{v}))} \leq | v^s_x| / (1+\lambda).
$$

Thus  $|v^s_x|/(1+\lambda) < |v^s_x |(1+\lambda)$ would imply equation~\eqref{eqn:projection}. Equivalently, it is sufficient
that $1 < (1+\lambda)^2$, which holds since $\lambda>0$.
Thus, we have established (strict) preservation of the unstable cones. 
The proof for the preservation of the stable cones  is entirely analogous.
\end{proof}

\begin{lemma} \label{lem:cone-expansion}
 Let $f$ be a hyperbolic polynomial diffeomorphism of $\Ct$ of dynamical degree $d(f)>1$ given by oracles of the coefficients of $f$.
     Then, the cone fields defined in  equations~\eqref{eqn:our-unstable-cone-defn} and ~\eqref{eqn:our-stable-cone-defn} of Step 3 in
Algorithm \ref{alg:hyp-detect} guarantee
 expansion  within the unstable(stable) cones under $Df^{m}\  (Df^{-m})$ by at least $(1+\lambda/2)$;
i.e., for all $x\in \sP'_n$  and all $\mathbf{v}\in C^u_x$ we have
\begin{equation}
  \label{eqn:unstable-cone-expansion}  
 \norm{D_xf^m(\mathbf{v})} \geq (1+\lambda/2) \norm{\mathbf{v}}.
 \end{equation}
 \end{lemma}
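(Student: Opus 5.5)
The plan is to decompose a cone vector along the splitting at $x$, push it forward by $D_xf^m$, and measure lengths in the norm adapted to $E^u\oplus E^s$. The weighting $\sector_\lambda$ in \eqref{eqn:defn-sector} was chosen precisely so that $(1+\lambda)/(1+\sector_\lambda) = 1+\lambda/2$.

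\emph{Step 1: decompose.} Fix $x\in\sP'_n$ and $\mathbf{v}\in C^u_x$, and write $\mathbf{v}=v^u_x\mathbf{e}^u_x+v^s_x\mathbf{e}^s_x$ with $|v^s_x|\le\sector_\lambda|v^u_x|$. As in the proof of Lemma~\ref{lem:cone-pres}, $x$ is a saddle periodic point, so linearity gives
\[
D_xf^m(\mathbf{v})=v^u_x D_xf^m(\mathbf{e}^u_x)+v^s_x D_xf^m(\mathbf{e}^s_x),
\]
where $D_xf^m(\mathbf{e}^u_x)\in E^u_{f^m(x)}$ and $D_xf^m(\mathbf{e}^s_x)\in E^s_{f^m(x)}$. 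By \eqref{eqn:define-m-lambda-unstable}, the $E^u_{f^m(x)}$-coordinate of $D_xf^m(\mathbf{v})$ has modulus $|v^u_x|\,\norm{D_xf^m(\mathbf{e}^u_x)}\ge(1+\lambda)|v^u_x|$.

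\emph{Step 2: choose the norm.} I would measure $\mathbf{v}$ in the norm $\norm{\mathbf{v}}_x:=|v^u_x|+|v^s_x|$. This norm is uniformly equivalent to the Euclidean one on $T_x\Ct$, and Newhouse--Palis permits any such norm. In this norm,
\[
\norm{D_xf^m(\mathbf{v})}_{f^m(x)} \ge (1+\lambda)|v^u_x| .
\]
On the other hand, the cone condition gives
\[
\norm{\mathbf{v}}_x\le(1+\sector_\lambda)|v^u_x|=\frac{1+\lambda}{1+\lambda/2}\,|v^u_x|.
\]
Dividing the two bounds yields \eqref{eqn:unstable-cone-expansion} with constant exactly $1+\lambda/2$. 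The stable case follows by the symmetric argument, using \eqref{eqn:define-m-lambda-stable} for $Df^{-m}$ with the roles of $u$ and $s$ swapped.

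\emph{Main obstacle.} The difficulty is the choice of norm. If one insists on the Euclidean norm in \eqref{eqn:unstable-cone-expansion} while $E^u_x$ and $E^s_x$ are not orthogonal, the naive estimate is
\[
\norm{D_xf^m\mathbf{v}}\ge(1+\lambda)|v^u_x|-|v^s_x|/(1+\lambda),
\]
and the subtracted term spoils the exact constant. So I would state and use the adapted norm $\norm{\cdot}_x$ above, which is also what the Newhouse--Palis criterion needs. If a Euclidean statement is required, the fallback is either to assume the splitting is taken orthogonal, or to absorb the angle between $E^u_x$ and $E^s_x$ into a slightly smaller constant. The latter is harmless because the later steps only use expansion by $1+\lambda/4$.
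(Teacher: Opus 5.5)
Your computation is the paper's own: decompose $\mathbf v$ along $E^u_x\oplus E^s_x$, bound $D_xf^m(\mathbf v)$ from below by its unstable part, $(1+\lambda)|v^u_x|$, bound $\norm{\mathbf v}\le |v^u_x|+|v^s_x|\le(1+\rho_\lambda)|v^u_x|$, and use $(1+\lambda)/(1+\rho_\lambda)=1+\lambda/2$. The one real difference is the norm. The paper stays with the Euclidean norm and asserts
\[
\norm{v^u_xD_xf^m(\mathbf e^u_x)+v^s_xD_xf^m(\mathbf e^s_x)}\ge\norm{v^u_xD_xf^m(\mathbf e^u_x)} .
\]
This holds for all cone vectors only when $E^u_{f^m(x)}\perp E^s_{f^m(x)}$. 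In your norm $\norm{\mathbf v}_x=|v^u_x|+|v^s_x|$ that step is automatic and the upper bound on $\norm{\mathbf v}_x$ is an equality. So your argument correctly proves the adapted-norm version, and your diagnosis of the Euclidean step is right. Strictly, you have changed the norm in the statement, but the example below shows this change (or a narrower cone) is necessary.

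The Euclidean formulation, had it been valid, would give a single norm on all boxes, which is what Lemma~\ref{lem:calculus} and Lemma~\ref{lem:cone-expansion-y} use. With your box-dependent norm you must rescale $Q$ by the equivalence constant, at most $2/\sin\angle(E^u_x,E^s_x)$, which is computable over the finite set $\sP'_n$. You must also account for the fact that the norm at $f^m(y)$ is the one attached to the periodic point representing the box containing $f^m(y)$, which need not be $f^m(x)$.

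Your fallback, however, is wrong as stated. The Step~2 data guarantee only the Euclidean ratio
\[
\frac{(1+\lambda)-\rho_\lambda/(1+\lambda)}{1+\rho_\lambda}=1+\frac{\lambda}{2}-\frac{\lambda}{2(1+\lambda)^2},
\]
and this value is approached. Take linear data satisfying Step~2 with equality: a fixed point with $E^u=\bC(1,0)$, $E^s=\bC(\cos\theta,\sin\theta)$, eigenvalues $1+\lambda$ and $-(1+\lambda)^{-1}$, and $\mathbf v=\mathbf e^u+\rho_\lambda\mathbf e^s$, and let $\theta\to0$. For $\lambda<\sqrt2-1$ this limit is below $1+\lambda/4$. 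So the loss is not slight, and absorbing it into the constant would break the margin Lemma~\ref{lem:cone-expansion-y} relies on.

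If you want the Euclidean statement, narrow the cone instead of weakening the constant. For $\rho=\lambda/(4+\lambda)$ one has $\rho\bigl(1+\frac{\lambda}{2}+\frac{1}{1+\lambda}\bigr)\le\frac{\lambda}{2}$, hence
\[
\norm{D_xf^m(\mathbf v)}\ge(1+\lambda)|v^u_x|-\frac{|v^s_x|}{1+\lambda}\ge\Bigl(1+\frac{\lambda}{2}\Bigr)\bigl(|v^u_x|+|v^s_x|\bigr)\ge\Bigl(1+\frac{\lambda}{2}\Bigr)\norm{\mathbf v},
\]
with no dependence on angles. Lemma~\ref{lem:cone-pres} does not use the specific value of $\rho_\lambda$, so it goes through for any $\rho\in(0,1)$.
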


\begin{proof}
    As before we make use of
$
D_xf^m(\mathbf{v}) 
= 
v^u_x D_xf^m(\mathbf{e}^u_x) + v^s_x D_xf^m(\mathbf{e}^s_x)$ 
to provide a lower bound on the norm of  $D_xf^m(\mathbf{v})$. We have
\begin{equation}\label{asd1}
\norm{D_xf^m(\mathbf{v}) }
= 
\norm{v^u_x D_xf^m(\mathbf{e}^u_x) 
+ v^s_x D_xf^m(\mathbf{e}^s_x)}
\geq \norm{v^u_x D_xf^m(\mathbf{e}^u_x)} 
\geq |v^u_x |(1+\lambda).
\end{equation}
By definition $\mathbf{v}\in C^u_x$ implies $|v^s_x| \leq \sector_\lambda |v^u_x|$, which provides an upper bound on the right hand side of equation~\eqref{eqn:unstable-cone-expansion}. We have  %
$\sector_\lambda = (1+\lambda)/(1+\lambda/2) - 1$ which yields
$(1+\sector_\lambda) = (1+\lambda)/(1+\lambda/2).$
Thus,
$$
\left(1+\frac{\lambda}{2}\right) \norm {\mathbf{v}} 
\leq \left(1+\frac{\lambda}{2}\right) ( |v^u_x| + |v^s_x| )
\leq \left(1+\frac{\lambda}{2}\right)  (1+\sector_\lambda) |v^u_x|
= (1+\lambda)|v^u_x|.
$$
Together with equation~\eqref{asd1} this inequality establishes equation~\eqref{eqn:unstable-cone-expansion}. Thus, we have established expansion of $Df^m$ on the unstable cones by at least $1+\lambda/2$.
The proof for the stable cones is entirely analogous.
\end{proof}

\begin{lemma}
\label{lem:cone-pres-y}
Let $f$ be a hyperbolic polynomial diffeomorphism of $\Ct$ of dynamical degree $d(f)>1$ given by oracles of the coefficients of $f$.
   Then, during Step 5 of Algorithm~\ref{alg:hyp-detect}, 
    for all $x\in \sP'_n$ and and $y\in B(x,\Delta)$, we have that $D_yf^m \ (D_y f^{-m}) $ preserves the unstable (stable) cones $C^{u(s)}_y$.
\end{lemma}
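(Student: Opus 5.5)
The plan is a perturbation argument: transfer the strict invariance at the saddle point $x$ (Lemma~\ref{lem:cone-pres}) to nearby $y$, using the Lipschitz control of normalized image directions from Lemma~\ref{lem:calculus}. The cone at $y$ is, by definition, the same as at $x$, namely $C^u_y=C^u_x$. The target cone is $C^u_{f^m(x)}$, the cone attached to the box containing $f^m(y)$ through the same orbit point. I will work with complex lines in $T\Ct$, i.e.\ points of $\mathbb{P}^1$. Both the cone $C^u_x$ and its image are then disks in $\mathbb{P}^1$ determined by the slope $|v^s|/|v^u|$. The boundary $\partial C^u_x$ corresponds to the circle of lines with slope exactly $\sector_\lambda$. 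Showing $D_yf^m(C^u_x)\subset C^u_{f^m(x)}$ therefore reduces to showing that every boundary line of $C^u_x$ is mapped by $D_yf^m$ into the interior of $C^u_{f^m(x)}$. Since $D_yf^m$ is a Möbius map on $\mathbb{P}^1$ and the image disk cannot contain the stable direction's region, the whole disk then maps inside.

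\textbf{Step 1.} For a boundary unit vector $\mathbf{b}\in\partial C^u_x$, compare the normalized vectors $D_yf^m(\mathbf{b})/\norm{D_yf^m(\mathbf{b})}$ and $D_xf^m(\mathbf{b})/\norm{D_xf^m(\mathbf{b})}$. By \eqref{eqn:defn-Q}, applied to this $\mathbf{b}$ (Lemma~\ref{lem:calculus} holds for every unit vector, so the bound is not restricted to the chosen $\mathbf{b}_x$), their distance is at most $Q\norm{x-y}\le Q\Delta<\gamma$ by \eqref{eqn:defn-Delta}.

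\textbf{Step 2.} Show that the normalized image $D_xf^m(\mathbf{b})/\norm{\cdot}$, together with its whole complex line's unit circle, stays at distance at least $\gamma$ from $\partial C^u_{f^m(x)}$. This is the content of the definition of $\gamma(x)$ in \eqref{eqn:defn-gamma-x}. The linear map $D_xf^m$ is diagonal in the eigenbases: it multiplies $v^u$ by $\mu_u$ and $v^s$ by $\mu_s$. It therefore scales the slope by the constant $|\mu_s/\mu_u|$, so all boundary lines of $C^u_x$ map to lines with one common slope $<\sector_\lambda$. The distance from their unit vectors to the boundary circle of $C^u_{f^m(x)}$ is then essentially independent of the choice of $\mathbf{b}$ up to the $S^1$ phase, and the minimum distance is realized as in $\gamma(x)$, provided $\mathbf{b}_{f^m(x)}$ is chosen as the nearest boundary unit vector. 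I will make that choice explicit, or redefine $\gamma(x)$ as the distance from the unit image vector to the set $\partial C^u_{f^m(x)}\cap\{\norm{\cdot}=1\}$, which is computable and positive by Lemma~\ref{lem:cone-pres}.

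\textbf{Step 3.} Combine the previous steps via the triangle inequality. The normalized image under $D_yf^m$ is within $Q\Delta<\gamma$ of a point whose distance to the boundary set is at least $\gamma$. It therefore cannot cross the boundary, and by continuity along a path from $x$ to $y$ it lies in the interior of $C^u_{f^m(x)}$. The stable case is identical with $f^{-m}$, $\mathbf{b}\in\partial C^s_x$, and the Lipschitz constant for $Df^{-m}$, obtained from Lemma~\ref{lem:calculus} applied to $f^{-1}$.

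\textbf{Main obstacle.} The hardest step is Step 2. The paper's $\gamma(x)$ uses a single pair $\mathbf{b}_x,\mathbf{b}_{f^m(x)}$, whereas the argument needs a uniform lower bound on the distance from the entire image of $\partial C^u_x$ to $\partial C^u_{f^m(x)}$. A further complication is that the Euclidean tangent norm differs from the cone slope coordinates when $E^u_x$ and $E^s_x$ are not orthogonal. I would handle both issues by working in slope coordinates, where the diagonal form makes all boundary points equivalent under the $S^1$ rotation $v^s\mapsto e^{i\theta}v^s$, which preserves both cones. The minimal distance is then attained for every $\mathbf{b}$ at the rotated nearest point. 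A non-orthogonality constant is absorbed into $Q$ if needed, by shrinking $\Delta$.
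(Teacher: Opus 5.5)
Your proposal is correct in substance. It uses the same perturbation argument as the paper: Lemma~\ref{lem:calculus} with $Q\Delta<\gamma$ keeps the normalized image of a cone vector under $D_yf^m$ close to its image under $D_xf^m$, which lies a definite distance inside $C^u_{f^m(x)}$ by Lemma~\ref{lem:cone-pres}. You are more careful than the paper exactly where its proof is thin. The paper uses only the single pair $\mathbf{b}_x,\mathbf{b}_{f^m(x)}$ of \eqref{eqn:defn-gamma-x}, asserts that $\gamma$ bounds the distance between $D_xf^m(\partial C^u_x)$ and $\partial C^u_{f^m(x)}$ for unit vectors, and then appeals to linearity. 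That assertion does not follow from the definition, because $\gamma(x)$ involves one boundary vector and $\mathbf{b}_{f^m(x)}$ is chosen without regard to the phases of $D_xf^m(\mathbf{b}_x)$. So $\gamma(x)$ can be of order one. The true margin, when \eqref{eqn:define-m-lambda-unstable}--\eqref{eqn:define-m-lambda-stable} are nearly sharp, is only of order $\lambda^2$ (slope $\sector_\lambda\approx\lambda/2$ against slope about $\sector_\lambda(1+\lambda)^{-2}$). Hence neither $Q\Delta<\gamma$ nor $Q\Delta<\lambda/4$ controls it. Three things you add are what turn the paper's sketch into a proof:
your margin is uniform over all of $\partial C^u_x$ and measured to the whole set $\partial C^u_{f^m(x)}\cap\{\norm{\cdot}=1\}$; your continuity argument along $[x,y]$ is what actually rules out crossing the boundary; and you treat the boundary-to-cone step explicitly. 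The price is that Step~4 of Algorithm~\ref{alg:hyp-detect} must change, so you are proving the lemma for a corrected $\gamma$.

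Two points in your execution should be tightened. First, the rotation $v^s\mapsto e^{i\theta}v^s$ commutes with $D_xf^m$ and preserves both cones. However, it is a Euclidean isometry only when $E^u_{f^m(x)}\perp E^s_{f^m(x)}$, so in general the distance in your Step~2 depends on $\mathbf{b}$, and one nearest-point computation does not give uniformity. Instead of a non-orthogonality constant, define $\gamma(x)$ as the minimum, over all unit vectors $\mathbf{v}\in C^u_x$, of the Euclidean distance from $D_xf^m(\mathbf{v})/\norm{D_xf^m(\mathbf{v})}$ to $\partial C^u_{f^m(x)}\cap\{\norm{\cdot}=1\}$. This is the minimum of a continuous function on a compact set, and a positive lower bound for it is computable. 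It is positive because $D_xf^m$ multiplies slopes by $|\mu_s/\mu_u|\le(1+\lambda)^{-2}$, hence maps $C^u_x\setminus\{0\}$ into the interior of $C^u_{f^m(x)}$.

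Second, once the minimum is taken over the whole unit cone, your Step~3 path argument applied to each $\mathbf{v}$ gives $D_yf^m(C^u_x)\subset C^u_{f^m(x)}$ directly. The M\"obius no-flip step, which you assert without proof, then becomes unnecessary. If you keep it, justify it by the same continuity argument applied to the line $(D_zf^m)^{-1}(E^s_{f^m(x)})$ for $z\in[x,y]$: it starts at $E^s_x\notin C^u_x$ and can never meet $\partial C^u_x$.

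Finally, you, like the paper, take the target cone to be $C^u_{f^m(x)}$. The algorithm assigns to the box containing $f^m(y)$ the cone of whichever periodic point was chosen there, and that need not be $f^m(x)$. This identification is therefore an unproved assumption in both arguments.
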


\begin{proof}
    Let $\norm{x-y} < \Delta$. It follows from equations~\eqref{eqn:defn-Delta}, \eqref{eqn:defn-Q}, \eqref{eqn:defn-gamma-x}, and~\eqref{eqn:defn-gamma} that
\begin{equation} \label{eqn:perturb-inside-cones}
  \norm{
\frac{D_x f^m (\mathbf{b}_x) }{\norm{D_x f^m (\mathbf{b}_x)} }
-\frac{D_y f^m (\mathbf{b}_x)}{\norm{D_y f^m (\mathbf{b}_x)}} 
}
\leq Q 
 \Delta < \gamma \leq \gamma(x)
=\norm{
\frac{D_xf^m(\mathbf{b}_x)}{\norm{ D_xf^m(\mathbf{b}_x)}} - \mathbf{b}_{f^m(x)}},
\end{equation}
where $\gamma(x)$ is the Euclidean distance between the unit vector $\mathbf{b}_{f^m(x)}$ in the boundary of $C^u_{f^m(x)}$ and an (appropriate) unit vector in the complex line of $D_xf^m(\mathbf{b}_x) \in D_xf^m(\partial C^u_x) \subsetneq C^u_{f^m(x)}.$ Here we also use that $Df^m$ strictly preserves the unstable cones.

Since $\norm{x-y}<\Delta$, equation~\eqref{eqn:perturb-inside-cones} (refering back to Figure~\ref{fig:cone-perturb}) shows that the image of the boundary of the cone $C^u_x$ not just only under $D_x f^m$ but also $D_y f^m$ 
must lie inside of $C^u_{f^m(x)},$
since the image of $C^u_x$ under $D_xf^m$ does and $\gamma$ is a bound on the distance between the boundary of $C^u_{f^m(x)}$ and the image of $D_xf^m(C^u_x)$ measured for unit vectors.
We conclude that $Df^m$ preserves the unstable cone field. Here we also make use of the linearity of $D_yf^m$, which assures that it is sufficient to check the invariance  for  one set of unit vectors.  The proof for the preservation of the stable cone field by $D_yf^{-m}$ is entirely analogous.
\end{proof}

\begin{lemma}
    \label{lem:cone-expansion-y}
    Let $f$ be a hyperbolic polynomial diffeomorphism of $\Ct$ of dynamical degree $d(f)>1$ given by oracles of the coefficients of $f$.
Then the computations in Step 5 of Algorithm~\ref{alg:hyp-detect},
imply for all
 $x\in \sP'_n$ and all $y\in B(x,\Delta)$ we have that
 $D_yf^m \ (D_yf^{-m})$  expands tangent vectors in the unstable (stable) cones $C^{u(s)}_y$ by at least $1+\lambda/4$.
\end{lemma}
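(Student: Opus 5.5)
We prove the unstable case: fix $x\in\sP'_n$, $y\in B(x,\Delta)$ and $\mathbf{v}\in C^u_y$. The stable case under $Df^{-m}$ is identical, with $f$ replaced by $f^{-1}$ and the roles of $E^u_x,E^s_x$ exchanged. The plan is a perturbation argument starting from Lemma~\ref{lem:cone-expansion}. Since $C^u_y$ is defined by the same data as $C^u_x$, namely the splitting $E^u_x\oplus E^s_x$ and the weight $\sector_\lambda$, the vector $\mathbf{v}$ lies in $C^u_x$, and Lemma~\ref{lem:cone-expansion} gives
\[
\norm{D_xf^m(\mathbf{v})}\ge (1+\lambda/2)\norm{\mathbf{v}}.
\]
It then suffices to show that replacing $D_xf^m$ by $D_yf^m$ costs at most $\frac{\lambda}{4}\norm{\mathbf{v}}$. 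Indeed, that gives
\[
\norm{D_yf^m(\mathbf{v})}\ge \norm{D_xf^m(\mathbf{v})}-\norm{(D_yf^m-D_xf^m)(\mathbf{v})}\ge (1+\lambda/4)\norm{\mathbf{v}}.
\]
By linearity we may assume $\norm{\mathbf{v}}=1$.

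The key estimate is
\[
\norm{D_yf^m(\mathbf{v})-D_xf^m(\mathbf{v})}\le Q\norm{x-y}< Q\Delta<\lambda/4,
\]
where the last inequality is exactly the second constraint in \eqref{eqn:defn-Delta}. To get the first inequality, I would return to the proof of Lemma~\ref{lem:calculus}. There $Q$ is computed as an upper bound for $\sup_{V_R}\norm{D^2 f^m}$, so the multidimensional mean value theorem along the segment from $x$ to $y$ gives $\norm{D_yf^m-D_xf^m}\le Q\norm{x-y}$ directly, for the unnormalized derivative. We need the segment to stay where the bound $Q$ is valid. This holds because $x$ and $y$ lie in the same (doubled) box, which is convex and contained in $V_R$, or at worst in a slightly enlarged box on which $Q$ can equally be computed.

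The main obstacle is that the statement of Lemma~\ref{lem:calculus}, and \eqref{eqn:defn-Q}, controls only the \emph{normalized} vectors $D f^m(\mathbf{v})/\norm{Df^m(\mathbf{v})}$, whereas expansion is a statement about unnormalized norms. I would first rely on the remark above, that the $Q$ produced in that proof is a second-derivative bound and hence also bounds the unnormalized difference. If one insists on using only \eqref{eqn:defn-Q} as stated, the fallback is to enlarge $Q$ in Step 5 to the maximum of the normalized Lipschitz constant and $\sup_{V_R}\norm{D^2f^m}$. This changes nothing in Lemma~\ref{lem:cone-pres-y} and keeps $\Delta$ computable.

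A secondary bookkeeping point concerns norms. The cones use the Euclidean norm on tangent vectors, while the metric is $L^\infty$ on base points. The required bound only involves $\norm{x-y}$ in the base metric, in which $Q$ and $\Delta$ were computed, so no further constants arise.
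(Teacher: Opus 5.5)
Your proposal is correct and follows the paper's proof essentially step for step. You take a unit vector in $C^u_y=C^u_x$, use the $(1+\lambda/2)$-expansion from Lemma~\ref{lem:cone-expansion} at $x$, bound $\norm{(D_yf^m-D_xf^m)(\mathbf{v})}\le Q\Delta<\lambda/4$, and finish with the reverse triangle inequality and linearity. The paper invokes Lemma~\ref{lem:calculus} directly for the unnormalized difference, even though that lemma as stated only controls normalized images; your observation that the bound must come from the second-derivative estimate in its proof (or from an enlarged $Q$) correctly fills a step the paper passes over silently.
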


\begin{proof}
    Let  $\mathbf{u}\in C^u_y=C^u_x$ be a unit vector.
By first applying Lemma~\ref{lem:calculus} and then equation~\eqref{eqn:defn-Delta} we obtain
$$
\norm{D_xf^m(\mathbf{u})-D_yf^m(\mathbf{u})} 
\leq  Q\Delta \leq \lambda/4.
$$
Combining this with the fact that $D_xf^m$ expands tangent vectors in the unstable cone $C^u_x$ by at least a factor of $1+\lambda/2$ (see equation~\eqref{eqn:unstable-cone-expansion}), and that for $y\in B(x,\Delta)$  the cone $C^u_y$ is defined as $C^u_x$ (with cone axes $E^{u(s)}_x$ and width defined by the same $\sector_\lambda$), we conclude that
$$
\norm{D_yf^m(\mathbf{u})} = 
\norm{D_yf^m(\mathbf{u})-D_xf^m(\mathbf{u})+D_yf^m(\mathbf{u})} 
$$
$$
\geq 
\norm{D_yf^m(\mathbf{u})} -\norm{D_xf^m(\mathbf{u})-D_yf^m(\mathbf{u})} 
\geq (1+\lambda/2) - Q\Delta \geq 1+\lambda/4.
$$
Analogously to the cone preservation argument, the linearity of $D_yf^m$ implies that by showing expansion for any unit tangent vector by $1+\lambda/4$, the same expansion holds for all  tangent vectors, i.e., 
$\norm{D_yf^m(\mathbf{v})} \geq (1+\lambda/4) \norm{\mathbf{v}}$
for all $\mathbf{v}\in C^u_y$.
The proof for showing that tangent vectors in $C^s_y$ are expanded by $D_yf^{-m}$ by at most $1+\lambda/4$ is entirely analogous. 
\end{proof}

We now assemble the statements of the lemmas to establish Proposition \ref{prop:J-generalized-henon-computable}.

\begin{proof}[Proof of Proposition~\ref{prop:proof-hypalg}]
We discuss the steps of Algorithm~\ref{alg:hyp-detect} to show that each step is needed for correctness and halting.

First,
we already stated in Step 2 of Algorithm~\ref{alg:hyp-detect} the  justification that for a given $N$, after executing Algorithm~\ref{alg:computeJ}, the loop through iterates $m$ will halt if $f$ is hyperbolic and produces $m,\lambda$ satisfying equations~\eqref{eqn:define-m-lambda-unstable} and ~\eqref{eqn:define-m-lambda-stable}.

Second, invoking Lemmas~\ref{lem:cone-pres} and~\ref{lem:cone-expansion} establishes preservation, and expansion by a factor of $(1+\lambda/2)$, of the cones $C^{u(s)}_x$ under $Df^m \ (Df^{-m})$, for $x\in \sP'_n$. 



Third, Lemma~\ref{lem:cone-pres-y} shows that for the constants defined in equations~\eqref{eqn:defn-Delta} (specifically, $Q\Delta < \gamma$), \eqref{eqn:defn-Q}, \eqref{eqn:defn-gamma}, and~\eqref{eqn:defn-gamma-x}, we get that for all $x\in \sP'_n$ and all  $y\in B(x,\Delta)$, $D_yf^m \ (D_y f^{-m}) $ preserves the unstable (stable) cones $C^{u(s)}_x$
for all $y\in B(x,\Delta)$, which was claimed in Step 5.

Fourth, Lemma~\ref{lem:cone-expansion-y} establishes that $D_yf^m (D_yf^{-m})$ not only preserves, but expands tangent vectors in the cones $C^u_y (C^s_y)$ by at least $1+\lambda/4$, provided $\norm{x-y}<\Delta$.

\medskip

Finally, we show that if $f$ is hyperbolic
then the algorithm  halts during Step 6 a some large enough $N\geq 1$, that is, for some $N\in\bN$ the algorithm determines $2^{-N}<\Delta.$ Recall that the definition of $f$  being hyperbolic is that $J$ is a uniformly hyperbolic set of $f$, see \cite{BS1}. This means that there exists a continuous $Df$-invariant splitting $T_J\Ct=E^s\oplus E^u$ and an iterate $m'$ and a constant $\lambda'>0$ such that $Df^{m'} \ (Df^{-m'})$  expands on $E^u (E^s)$ in the Euclidean metric by at least $1+\lambda'$. In the case of saddle periodic points the splitting coincides with the splitting given by the stable and unstable eigenspaces. We note that (for the same reasons as in the Newhouse/Palis proof of the existence of cone fields), as the box sizes decrease, the iterates $m$ increase and the expansion constants $\lambda>0$ decrease. Therefore, once the algorithm computes the expansion of $D_xf^m\vert_{E^u_x}$ at a sufficiently high precision,  $m$ stops increasing when $m>m'$ is reached, and $\lambda$ stops decreasing once $\lambda<\lambda'$ holds. This shows that when the algorithm  reaches a sufficiently small enough box size,  the constructed cone fields will be a close enough approximation of the splitting, in which case the algorithm halts. 
\end{proof}

Next, we show that Proposition~\ref{prop:proof-hypalg}  implies Theorem~\ref{thm:hyp-locus}.

\begin{proof}[Proof of Theorem~\ref{thm:hyp-locus}]
The proof of the analogous result for polynomial skew products in \cite{BoydWolf-Skew1} (see Section 4) can be easily adapted to the setting of hyperbolic \Henon maps. That proof requires sweeping through an increasingly fine grid of parameters on an increasing sized box in parameter space, and at each step, for each tested parameter, rather than attempting to apply Algorithm~\ref{alg:hyp-detect} to completion (as it will run forever if the map is not hyperbolic), we run the loop only up to a maximum $N$, where $N$ increases through the main loop/ parameter space grid search of the algorithm. At each step, any map we identify being hyperbolic we  refine the same ideas of Algorithm~\ref{alg:hyp-detect} to calculate a neighborhood of this map in which all maps are hyperbolic. In other words, we allow the coefficients of the map to vary slightly while calculating error bounds. This procedure provides an open subset containing hyperbolic maps. We then continue  the search algorithm by increasing the precision of the parameter grid and simultaneously increasing $N$. Analogously as shown in \cite{BoydWolf-Skew1}, the obtained collection of open sets grows as one executes the main loop, to  cover in the limit the parameter space of all hyperbolic maps. 
\end{proof}

\subsection{Detecting disconnectivity for hyperbolic Julia sets}
\label{sec:otherapps}

Using the algorithms of the prior sections as a foundation, we can easily computationally extract additional dynamical information of interest for hyperbolic polynomial diffeomorphisms of $\Ct$. We provide a first example in this subsection: detecting disconnectivity of $J$ for a hyperbolic polynomial diffeomorphism of $\Ct$.

Indeed, it is a fairly straightforward consequence of Algorithm~\ref{alg:computeJ}---which computes an approximation of $J$ as
 a collection of boxes with non-overlapping interiors, formed as a subset of a grid on a large box $V_R = \{ (x,y): |x|,|y|\leq R\}$, and which is guaranteed to contain $J$---that if $J$ is disconnected we eventually (for a sufficiently high precision) detect the separation of $J$ into different connected components.

\begin{proposition} \label{prop:J-disconnected}
Having a disconnected Julia set is a semi-decidable property  in the space of hyperbolic polynomial diffeomorphisms of $\Ct$ of fixed dynamical degree $d>1$.
\end{proposition}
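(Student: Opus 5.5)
The plan is to run Algorithm~\ref{alg:computeJ} with increasing $N=1,2,3,\dots$ and, at each stage, test the output $\Set'_N$ for disconnectedness; we halt and report ``disconnected'' as soon as $\Set'_N$ splits into at least two pairwise disjoint nonempty unions of boxes, each containing a point of $\sP'_n$. The test is finite and exact. The set $\Set'_N$ is a finite union of closed dyadic boxes (the doubles $2B_j$, $B_j\in\sB'_k$). Form the graph whose vertices are these boxes and whose edges join two boxes that intersect, which is decidable for dyadic boxes. Then compute its connected components by breadth-first search. The union of boxes in one graph component is connected, and distinct graph components give disjoint compact sets. So $\Set'_N$ is disconnected iff the graph has at least two components.

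For soundness, suppose the algorithm halts at some $N$. Then $\Set'_N=A_1\sqcup A_2$ with $A_1,A_2$ compact, nonempty and disjoint. By the proof of Proposition~\ref{prop:J-generalized-henon-computable}, $J\subset\Set'_N$. Moreover, every box $2B_j$ contains a saddle periodic point, hence a point of $J$ (here we use $J=J^*$ for hyperbolic $f$). Thus $J\cap A_1$ and $J\cap A_2$ are both nonempty, and they are relatively clopen in $J$, so $J$ is disconnected. The point to verify carefully is that the saddle-type boxes really do cover $J$. This holds because $\sB'_k$ consists of whole strongly connected components of saddle type, while attracting cycles lie in components of attracting type. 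Note that $J$ is the unique saddle basic set, since Bedford--Smillie give $\cR = J$ union finitely many attracting orbits.

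For completeness, suppose $J$ is disconnected, say $J=K_1\sqcup K_2$ with $K_i$ nonempty and compact, and let $\delta=d(K_1,K_2)>0$. Choose $N$ with $2^{-N}<\delta/3$. Algorithm~\ref{alg:computeJ} halts, since $f$ is hyperbolic, and outputs $\Set'_N$ with $J\subset\Set'_N\subset\cN(J,2^{-N})$. Each box of $\Set'_N$ has sidelength less than $2^{-N}$ and meets $J$, so it lies within $2^{-N}$ of exactly one of $K_1,K_2$. Two intersecting boxes cannot be assigned to different $K_i$: otherwise there would be points of $K_1$ and $K_2$ at distance less than $3\cdot 2^{-N}<\delta$. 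Hence the intersection graph has at least two components, and the test succeeds. This step is the main obstacle. The distances must be tracked in the $L^\infty$ metric with doubled boxes, and we must confirm that each doubled box meets $J$, not merely $\cN(J,2^{-2n})$. I would handle this using the guaranteed saddle periodic point inside each $2B_j$ from the proof of Proposition~\ref{prop:J-generalized-henon-computable}.

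Since the procedure halts exactly when $J$ is disconnected, and runs forever otherwise, disconnectivity is semi-decidable on hyperbolic maps. The argument is uniform in the degree, because Algorithm~\ref{alg:computeJ} is a single machine for all polynomial diffeomorphisms.
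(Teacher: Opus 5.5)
Your proposal is correct and follows the paper's approach: run Algorithm~\ref{alg:computeJ} for increasing $N$ and detect a splitting of the finite box union $\Set'_N$ by a neighbor search, with completeness coming from $J\subset\Set'_N\subset\cN(J,2^{-N})$. You go further than the paper on soundness, which the paper only asserts, by arguing that every doubled box in $\Set'_N$ contains a saddle periodic point and hence meets $J$, so any separation of $\Set'_N$ induces one of $J$; that ``saddle'' guarantee comes from the component classification (as stated in Step~1 of the hyperbolicity algorithm), not from the proof of Proposition~3.1 itself, which only gives a periodic point of some type.
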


As before, to prove this result we show that there exists a Turing machine that, on input of an oracle of the coefficients defining $f$,
the Turing machine halts if $J$ is disconnected, and runs forever if $J$ is connected.

\begin{proof}[Proof of Proposition~\ref{prop:J-disconnected}]
Let $f$ be a polynomial diffeomorphism $f$ of $\Ct$ with $d(f)>1$.
Algorithm~\ref{alg:computeJ} provides, if $f$ hyperbolic, a collection of closed boxes whose union is a set $\Set'_N$ containing $J$, and with Hausdorff distance at most $2^{-N}$ to $J$. 

Thus, if $J$ is disconnected, for $N$ sufficiently large, $\Set'_N$ will  be disconnected. That is easy to detect, since $\Set'_N$ is  a finite union of closed boxes (specifically, the doubles of boxes in a grid). For example, starting with one box it is easy to list every other box which overlaps this box (check for grid neighbors), so throw the one box and boxes it touches into a set of boxes associated with one connected component of $\Set'_N$. Then add to the set all boxes which touch each of the boxes added in the prior step, and continue adding boxes which touch boxes you just added to the set, until we have identified all the boxes in a topologically connected subset of $\Set'_N$.  If there are any boxes in $\Set'_N$ not in this set, $J$ is disconnected. Going further, we can, starting with a box not in this first connected component, repeat the above to subdivide $\Set'_N$ into a finite number of distinct connected components, though if $J$ is disconnected it has infinitely many disjoint connected components so we can not separate ``all'' of them.
\end{proof}


\medskip
\section{Generalizations}
\label{sec:generalizations}

Because our techniques are grounded in general dynamical ideas (rather than specific complex-analytic results, for example), they can be adapted to various other classes of dynamical systems. 

For a more general hyperbolicity criteria we may consider Axiom A, but specifically, as our algorithms construct the chain recurrent set, we require that the chain recurrent set is a uniformly hyperbolic set and periodic points are dense in it. Note that the non-wandering set is always contained in the chain recurrent set. 

For one example of a generalization, in \cite{Wolf-Shafikov2003}, Shafikov and Wolf study regular polynomial automorphisms/diffeomorphisms of $\CC^n$, introduced by Sibony in \cite{Sibony1997}.  These are a natural generalization of \Henon maps to higher dimension. 

A polynomial automorphism $f$ of $\Cn$ is \textit{regular} if $I(f)\cap I(f^{-1})=\emptyset$, where $I(f)$ denotes
 the indeterminacy set of the rational extension of $f$
 to the projective space $\mathbb{P}^n$. 
 For such mappings $f$, the forward and 
 backward Julia sets $J^+, J^-$,
are defined as for \Henon maps, 
namely $J^\pm$ is the boundary of the set of points with bounded forward/backward orbits, and
$J=J^+ \cap J^-$
is called the Julia set of $f$.
In Section 3 of \cite{Wolf-Shafikov2003}, using results of Sibony, the authors construct  a filtration for regular automorphisms, which has similar properties to the standard filtration for \Henon maps.  The authors of \cite{Wolf-Shafikov2003} say that $f$ is \textit{hyperbolic} if $J$ is a hyperbolic set \textit{and} the saddle periodic points of $f$ are dense in $J$ (essentially, an Axiom A type of criteria). By Bedford and Smillie (\cite{BS1}), for dimension $2$, the second assumption follows from the first, but this is not true  in general in higher dimensions. 

Since in this setting, hyperbolic maps have dense periodic points  in $J$, and $J$ is a hyperbolic set, 
re-reading this article through the lens of this setting, one can see that the techniques carry forward to establish:

\begin{theorem}
\label{thm:polyauto-Cn}
The Julia set $J(f)$ of a regular polynomial automorphism $f$ of $\Cn$ is computable if $f$ is hyperbolic (i.e., if $J$ is a hyperbolic set and periodic points are dense in $J$). 
Moreover, hyperbolicity of a regular polynomial automorphism $f$ of $\Cn$ is a semi-decidable property. 
    \end{theorem}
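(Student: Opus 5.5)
The plan is to rerun the two algorithms of Section~\ref{sec:results} in $\Cn$, replacing every use of a two-dimensional or complex-analytic fact by one of the two hypotheses built into the definition of hyperbolicity: $J$ is a uniformly hyperbolic set, and saddle periodic points are dense in $J$.

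\textbf{Step 1: a bounded filtration box.} First, I replace the computation of $V_R$ by the filtration of Shafikov--Wolf \cite{Wolf-Shafikov2003}, built from Sibony's results. From the coefficients of $f$ this gives a computable $R$ with $\cR(f)\subset V_R=[-R,R]^{2n}$. The construction uses explicit estimates of the form $\norm{f(x)}\geq c\norm{x}^{d_+}$ outside a sector near $I(f^{-1})$, and the analogue for $f^{-1}$ near $I(f)$. The constants $c$ and the sector widths are explicit in the coefficients, so $R$ is computable.

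\textbf{Step 2: computing $J$.} Given $R$, Algorithm~\ref{alg:computeJ} transfers verbatim. The box chain recurrent model of \cite{BoydWolf-Skew1} (Definition~\ref{defn:boxchrecmodel}) needs only computable interval enclosures of $f(B)$ on a $(2^k)^{2n}$ grid. Periodic points of period at most $\ell_n$ are computed to precision $2^{-2n}$ as isolated solutions of polynomial systems. The halting argument in the proof of Proposition~\ref{prop:J-generalized-henon-computable} needs one fact: every box of some $\sB_k$ eventually meets $\sP_n$. In dimension two this came from Bedford--Smillie. Here I would use a structural input instead: for hyperbolic $J$ with dense saddles, $\cR$ is $J$ together with finitely many attracting cycles. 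I would derive this from the spectral decomposition for Axiom A type sets together with the filtration, which excludes other recurrence (for instance, the escaping set near $I(f^{\pm})$ has no chain recurrence). If $\cR$ could contain extra hyperbolic pieces that are neither $J$ nor sinks, such as repelling cycles or saddles of index different from that of $J$, I would still have density of periodic points in $\cR$ and the halting argument would go through. Only the classification into saddle-type and attracting components would then need revision: classify by unstable dimension, and keep as $\Set'_N$ the components whose periodic points lie in $J^+\cap J^-$, that is, those of mixed index.

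\textbf{Step 3: semi-decidability.} Algorithm~\ref{alg:hyp-detect} is rerun with $E^u_x,E^s_x$ now of complex dimensions $u$ and $n-u$. The cone definition \eqref{eqn:our-unstable-cone-defn} makes sense with norms of projections. Lemmas~\ref{lem:cone-pres} and~\ref{lem:cone-expansion} go through unchanged, since they only used linearity, invariance of the splitting at periodic points, and the bounds $\norm{D_xf^m|_{E^u}}\geq 1+\lambda$ and $\norm{D_xf^{-m}|_{E^s}}\geq 1+\lambda$. These bounds must be read as minimal expansion (conorm), not operator norm. Lemma~\ref{lem:calculus} holds in any dimension.

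\textbf{Main obstacle.} The hard step is Lemma~\ref{lem:cone-pres-y}. It tracked a single boundary unit vector $\mathbf{b}_x$, which was sufficient in $\CTwo$ only because the cone boundary was essentially one-dimensional modulo scaling. In $\Cn$ I would define $\gamma(x)$ instead as the minimum, over the compact set of unit vectors in $\partial C^u_x$, of the distance from the normalized image to the complement of $C^u_{f^m(x)}$. The compact set of boundary unit vectors is computably approximable, so I would obtain a computable positive lower bound for $\gamma(x)$ by a finite net and Lipschitz estimates. The uniform estimate of Lemma~\ref{lem:calculus} then gives the same perturbation argument. With this, the Newhouse--Palis criterion gives correctness. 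Halting follows, as in the proof of Proposition~\ref{prop:proof-hypalg}, from uniform hyperbolicity of $J$ via a fixed $m',\lambda'$ valid on all of $J$ and hence on all saddle points found.
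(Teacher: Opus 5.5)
\textbf{Overall route.} Your route is the paper's: rerun Algorithms~\ref{alg:computeJ} and~\ref{alg:hyp-detect} in $\Cn$, with the Shafikov--Wolf filtration supplying $V_R$, density of saddles supplying halting, and cones built on the higher-dimensional eigen-splittings. Two of your changes are genuine repairs: the conorm reading, and defining $\gamma(x)$ as a minimum over all unit vectors of $\partial C^u_x$, measured against the complement of $C^u_{f^m(x)}$. The second repair is needed already in $\CTwo$, because there $\partial C^u_x$ modulo complex scaling is a circle in $\mathbb{P}^1$, not a single line.

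\textbf{The gap in Step~2.} The real gap is your claim that $\cR$ is $J$ together with finitely many sinks.
\begin{itemize}
\item Halting of Algorithm~\ref{alg:computeJ} at every precision requires that, for some $k$, every box of the cover $\sB_k\supset\cR$ meets $\sP_n$. In particular, periodic points must be dense in all of $\cR$, whereas the hypothesis only concerns $J$.
\item The tools you cite do not give this. The spectral decomposition applies to a hyperbolic set with dense periodic points, which here is only $J$ itself; it says nothing about $\cR\setminus J$. The filtration only gives $\cR\subset K$ and rules out recurrence near infinity.
\item Neither excludes non-hyperbolic recurrence in $K\setminus J\subset \mathrm{int}\,K^+\cup\mathrm{int}\,K^-$. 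An example is a semi-Siegel cycle whose center disc is filled with invariant circles carrying an irrational rotation. Those circles are chain recurrent but contain no periodic points, so the boxes covering them never meet $\sP_n$ and the algorithm never halts.
\item Your fallback sentence treats only extra \emph{hyperbolic} pieces, so it presupposes what is missing.
\end{itemize}
In $\CTwo$ this behaviour is excluded by the Bedford--Smillie theorem that hyperbolicity on $J$ forces $\cR$ to be $J$ plus finitely many sinks. That theorem is proved with complex-analytic tools specific to that setting, not by spectral decomposition. The paper proves no $\Cn$ analogue. At the start of Section~\ref{sec:generalizations} it simply requires that $\cR$ be uniformly hyperbolic with dense periodic points, and its sketch tacitly runs under that requirement. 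You must either prove such an analogue for regular automorphisms or adopt that hypothesis explicitly.

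\textbf{Correctness of the semi-decision procedure.} Your claim that Newhouse--Palis then gives correctness also needs more.
\begin{itemize}
\item For $n>2$, ``hyperbolic'' includes density of periodic points in $J$, which, as the paper itself remarks, does not follow from hyperbolicity of $J$. A halting run certifies a cone field near $J$ but only $2^{-N}$-density of saddles at the halting level $N$. So it does not certify the second half of the definition, and your argument does not exclude false positives.
\item Lemma~\ref{lem:cone-pres-y} (yours and the paper's) gives $D_yf^m(C^u_y)\subset C^u_{f^m(x)}$, where $x$ is the periodic point chosen in the box of $y$. But the cone field assigns to $f^m(y)$ the cone of the periodic point chosen in the box containing $f^m(y)$. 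That point need not be $f^m(x)$, nor even lie in the same box, since $f^m$ can stretch a box by up to $\mathrm{Lip}(f^m)$. To apply the cone criterion you must additionally verify, using the slack $\gamma$, that $D_yf^m(C^u_y)$ lies in the cone of every box met by $f^m(2B_j)$, and dually for the stable cones.
\item Lemma~\ref{lem:cone-expansion} does not go through verbatim. Its first inequality, $\norm{a+b}\geq\norm{a}$ for $a\in E^u_{f^m(x)}$ and $b\in E^s_{f^m(x)}$, presumes the splitting is orthogonal. The triangle inequality still yields an expansion factor $1+\frac{\lambda}{2}\bigl(1-(1+\lambda)^{-2}\bigr)>1$, but the constant in Step~5 must be adjusted accordingly.
\end{itemize}
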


Since the unstable and stable dimensions must add up to $n$, for $n>2$ there are stable respectively unstable spaces which are not one-dimensional. But this case can be treated similarly.  

\medskip

For another example, as mentioned earlier, Boyd and Wolf considered polynomial skew products of $\Ct$ in \cite{BoydWolf-Skew1}. A key technique of that approach is using shadowing to quantify the precision of the approximation to the purely expanding component of the chain recurrent set (the main Julia set), which was also their approach in \cite{BoydWolf-1Dim} for  polynomials of $\CC$. This is why in that paper, the algorithm to compute $J$ required establishing hyperbolicity in order to apply shadowing thus compute the precision of our estimate for $J$. For the saddle invariant set, \cite{BoydWolf-Skew1} employed a periodic points approach. Focusing on periodic points is the main technique of this article, which is why here we were able to establish the computability of $J$ without proving hyperbolicity; though we do this by using the cone field approach.  

We observe that the periodic point approach used in this article could be adapted to the expanding case as well, to establish the results of this article for polynomial skew products, by a different approach than  in \cite{BoydWolf-Skew1}. Moreover, this approach is sufficiently general that it could be applied to any polynomial \textit{endomorphisms} of $\Ct$, with the appropriate definition of a map being hyperbolic.

More precisely, a regular polynomial endomorphism on $\Cn$ is a map on $\Cn$, $n\geq 2$, of the form
$f=(f_1, \ldots, f_n)$, such that the $f_i$ are polynomial maps of degree $d\geq 2$ and $\hat{f}^{-1}_1(0) \cap \ldots \cap \hat{f}^{-1}_k(0) = {0},$ (i.e., the $\hat{f}_j$ have no common components), where $\hat{f}_j$ is the homogeneous part of degree $d$ of $f_j$, $j=1,\ldots,n$. This type of map extends holomorphically to $\mathbb{P}^n$. 
Then we can define $K$ as the set of bounded orbits 
(and $J$ can  be defined as the support of a current defined appropriately in terms of the Green's function). The set $K$ is compact with $J\subset K$. We refer to \cite{BJ2000a, BJ2000b, FS1994, FS2001b, Heinemann1996, HP1994, Klimek1995,Stawiska2005,StawiskaThesis,Ueda1998} for details.

Then we can apply the techniques of this paper to obtain the following result:

\begin{theorem} \label{thm:poly-endos}
If $f$ is a polynomial endomorphism of $\Cn$, of degree at least two, which is hyperbolic on its chain recurrent set $\mathcal{R}(f)$ (i.e., $\cR$ is a hyperbolic set for $f$ and  periodic points are dense in $\cR$), then $\cR$ is computable. 
Moreover, hyperbolicity on $\cR$ of a polynomial endomorphism of $\Cn$ is a semi-decidable property.
\end{theorem}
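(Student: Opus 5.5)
We follow the structure of Algorithms~\ref{alg:computeJ} and~\ref{alg:hyp-detect}, replacing each ingredient that used invertibility or the dimension two structure by one valid for polynomial endomorphisms of $\Cn$.

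\textbf{Trapping region and box model.} Because $f$ is regular of degree $d\geq 2$, the homogeneous part has no common zeros. Hence $\min_{\snorm{x}=1}\snorm{\hat f(x)} =: c>0$, and this quantity can be lower-bounded algorithmically, for example by a grid search with a Lipschitz bound. It follows that $\snorm{f(x)}\geq c\snorm{x}^d - C\snorm{x}^{d-1}-C'$, so we can compute $R$ with $\snorm{f(x)}>\snorm{x}$ whenever $\snorm{x}\geq R$. Thus $\cR\subset K\subset V_R=[-R,R]^{2n}$. On $V_R$ we run the box chain recurrent model construction of \cite{BoydWolf-Skew1}, which only needs outer enclosures of $f(B_k)$. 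This yields nested box collections $\sB_n$ with $\cR\subset\cB_n$, exactly as in Definition~\ref{defn:boxchrecmodel}.

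\textbf{Computability of $\cR$.} We compute all periodic points of period at most $\ell_n$ to precision $2^{-2n}$. This is a polynomial system $f^\ell(x)=x$. Hyperbolicity gives nondegenerate fixed points of $f^\ell$ on $\cR$ (no eigenvalue equal to $1$), so their isolation can be certified, e.g.\ by Krawczyk or interval Newton. Periodic points off $\cR$ do not exist, since every periodic point is chain recurrent. We then apply the same halting test as in Algorithm~\ref{alg:computeJ}: find $k\le n$ such that every box of $\sB_k$ meets $\sP_n$. The proof of Proposition~\ref{prop:J-generalized-henon-computable} carries over verbatim, giving $\cR\subset\Set_N\subset\cN(\cR,2^{-N})$. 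Halting uses the hypothesis that periodic points are dense in $\cR$. This replaces the Bedford--Smillie density result used in dimension two, and is the reason the hypothesis is included in the statement.

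\textbf{Semi-decidability.} For endomorphisms, $f$ may be noninvertible, so Algorithm~\ref{alg:hyp-detect} must be modified. We use the cone criterion for hyperbolic sets of endomorphisms: forward-invariant, expanded unstable cones of dimension $u$, together with cones complementary to them, along which $Df^m$ contracts. The unstable dimension $u$ is read off at each periodic point, and is constant on each box chain component. At a periodic point $x$ we compute the splitting $E^u_x\oplus E^s_x$. We define $C^u_x$ as in \eqref{eqn:our-unstable-cone-defn}, with $\norm{\mathbf v^s}\le\sector_\lambda\norm{\mathbf v^u}$ now for subspaces of arbitrary dimension. We define the stable cone as the set of vectors $\mathbf w$ with $\norm{\mathbf w^u}\le\sector_\lambda\norm{\mathbf w^s}$, and require forward contraction $\norm{D_xf^m\mathbf w}\le(1+\lambda/4)^{-1}\norm{\mathbf w}$ there. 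This forward condition replaces expansion by $Df^{-m}$.

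Lemmas~\ref{lem:cone-pres} and~\ref{lem:cone-expansion} go through with operator norms: $\norm{D_xf^m|_{E^u}}$ is replaced by the conorm $\min_{\norm{\mathbf e}=1}\norm{D_xf^m\mathbf e}$ on $E^u$, and $D_xf^m|_{E^s}$ is bounded above. The perturbation step (Lemmas~\ref{lem:cone-pres-y} and~\ref{lem:cone-expansion-y}) uses Lemma~\ref{lem:calculus} in $\Cn$. Here $\gamma(x)$ is redefined as the distance between $D_xf^m(C^u_x\cap\{\text{unit sphere}\})$ and the boundary of $C^u_{f^m x}$. This is a compact, computable minimum, and it is positive by strict invariance. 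Once $2^{-N}<\Delta$, the cones are constant on the doubled boxes and certify hyperbolicity of $\cR$. Since every box covers part of $\cR$, and $\cR$ is invariant, this suffices.

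\textbf{Main obstacle.} The hardest step is the halting argument, i.e.\ showing that $m_N,\lambda_N$ stabilize. As in the final paragraph of the proof of Proposition~\ref{prop:proof-hypalg}, we fix an adapted iterate $m'$ and rate $\lambda'$ from the hyperbolic splitting on $\cR$. We then show the algorithm eventually tests $(m',\lambda'/2)$, and $\Delta(m',\lambda'/2)$ is bounded below independently of $N$. For this we must bound $\gamma$ below uniformly, using continuity of the splitting on compact $\cR$. In the noninvertible case the unstable direction is not a function on $\cR$ but on the natural extension. We would handle this by using the unstable cones instead, which are still well defined on $\cR$ via the cone field criterion, rather than unstable subspaces.
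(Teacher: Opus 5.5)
Your outline is the route the paper intends; for this theorem the paper only asserts that Algorithms~\ref{alg:computeJ} and~\ref{alg:hyp-detect} carry over. Your computability of $\cR$ goes through as you describe: the trapping box from regularity, certified simple periodic points, and density of periodic points in $\cR$ in place of Bedford--Smillie. The semi-decidability part, however, has two genuine gaps, both created by passing to noninvertible maps.

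\textbf{The certificate is not sound.} In the paper the stable cones are certified by invariance under $Df^{-m}$ plus expansion by $Df^{-m}$. You keep only forward contraction on $C^s=\{\norm{\mathbf w^u}\le\sector_\lambda\norm{\mathbf w^s}\}$ and invoke a criterion for cones ``complementary'' to $C^u$. Since $\sector_\lambda<1$, your $C^s$ and $C^u$ leave a gap between them, and the inequalities you verify do not imply hyperbolicity. Here is a counterexample:
\begin{itemize}
\item Let $A=\begin{pmatrix}2&100\\0&1\end{pmatrix}$, $\mathbf e^u=(1,0)$ and $\mathbf e^s=(100,-2)/\sqrt{10004}$, so that $A\mathbf e^s=(0,-2)/\sqrt{10004}$.
\item Take $\sector_\lambda=1/10$, i.e.\ $\lambda=2/9$.
\item Then $A$ maps $C^u$ into $\{\norm{\mathbf v^s}\le\tfrac1{19}\norm{\mathbf v^u}\}$ and expands it by at least $2/1.1$.
\item $A$ also contracts $C^s$ by a factor less than $1/4$.
\item Yet $A$ has eigenvalue $1$, with eigenvector $(-100,1)$ sitting in the gap between the cones.
\end{itemize}
So your procedure halting does not prove that $f$ is hyperbolic. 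You need the noninvertible replacement for $Df^{-m}(C^s)\subset C^s$, namely backward invariance: $D_yf^m\mathbf w\in C^s_{B'}\Rightarrow\mathbf w\in C^s_B$. This can be checked with the same perturbation estimates, as strict forward invariance of the wide cone $\{\norm{\mathbf w^u}\ge\sector_\lambda\norm{\mathbf w^s}\}$, which $A$ indeed violates.

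\textbf{Halting is not established.} Your $\gamma(x)$ compares $D_xf^m(C^u_x)$ with $C^u_{f^m(x)}$. For a box-wise constant cone field, though, $D_yf^m(C^u_B)$ must lie in the cone of the box containing $f^m(y)$, which is built from some other periodic point $x'$.
\begin{itemize}
\item For diffeomorphisms, continuity of the splitting gives $E^u_{x'}\approx E^u_{f^m(x)}$ once the boxes are small; the paper also passes over this step quickly.
\item For endomorphisms the unstable direction lives on the natural extension. The points $x'$ and $f^m(x)$ can be arbitrarily close while their periodic prehistories, and hence $E^u_{x'}$ and $E^u_{f^m(x)}$, stay apart.
\end{itemize}
More intrinsically, pushing cones forward along a prehistory $\hat y\subset\cR$ shows that any forward-invariant cone field on $\cR$ must contain $E^u_{\hat y}$ for every such prehistory of $y$. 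Your cones instead have aperture $\sector_\lambda<1$ about a single $E^u_x$. If the unstable directions at some point of $\cR$ spread more than that, your test fails for every $N$, and no uniform lower bound on $\gamma$ or $\Delta$ exists.

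Saying you would ``use the unstable cones instead'' does not repair this, because your cones are still these narrow cones about $E^u_x$. The construction itself has to change. One option is to define the unstable cones as complements of thin cones about $E^s_x$; the stable bundle, unlike $E^u$, is a continuous function on $\cR$. Their aperture should be fixed independently of $\lambda$, with expansion obtained by taking $m$ large. The invariance and expansion lemmas must then be redone for these cones.
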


The interested reader may wish to investigate the computability of relevant hyperbolic invariant sets across various families of dynamical systems (both real and complex) by adapting the techniques developed in this article together with those in \cite{BoydWolf-1Dim}. The latter employs purely dynamical methods to re-establish polynomial-time computability of the Julia set and the semi-decidability of hyperbolicity for rational maps of $\CC$ (see \cite{Braverman2005} for the original proof based on complex-analytic methods).
Similarly, \cite{BoydWolf-Skew1} employed dynamical techniques to establish computability of $J$ and semi-decidability of hyperbolicity for polynomial skew products (endomorphisms) of $\Ct$. In contrast to the present article, where the invariant sets considered are attractors or saddle sets, the works \cite{BoydWolf-Skew1, BoydWolf-1Dim} primarily focus on uniformly expanding sets. Taken together with this article, these three articles provide a broad collection of tools for studying computability for different types of hyperbolic invariant sets.

\bibliographystyle{plain}

\end{document}